\newtheorem{theorem}{Theorem}[section]
\newtheorem{corollary}[theorem]{Corollary}
\newtheorem{lemma}[theorem]{Lemma}
\newtheorem{example}[theorem]{Example}
\journal{~}
\begin{document}
\begin{spacing}{1.15}
\begin{CJK*}{GBK}{song}
\begin{frontmatter}
\title{\textbf{Spectral characterizations of local structures of graphs and hypergraphs}}

\author{Jiang Zhou}
\author{Changjiang Bu}

\address{School of Mathematical Sciences, Harbin Engineering University, Harbin 150001, PR China}

\begin{abstract}
In this paper, we give the relationship between spectral radius and local structures of graphs and hypergraphs. Our work shows that certain local subgraphs (subhypergraphs) must occur when the spectral radius ratio is large. We also give spectral bounds on the local vector chromatic number in terms of tensor eigenvalues of graphs.
\end{abstract}

\begin{keyword}
Spectral radius; Hypergraph; Tensor; Vector chromatic number
\\
\emph{AMS classification (2020):} 05C50, 05C35, 05C15
\end{keyword}
\end{frontmatter}

\section{Introduction}
A hypergraph $H$ is called $r$-\textit{uniform} if each edge of $H$ contains exactly $r$ distinct vertices. Let $V(H)$ and $E(H)$ denote the vertex set and the edge set of $H$, respectively. The \textit{2-section graph} of a hypergraph $H$, denoted by $H^{(2)}$, is the graph on the vertex set $V(H)$, and $\{u,v\}\in E(H^{(2)})$ if and only if there exists $e\in E(H)$ such that $\{u,v\}\subseteq e$. A hypergraph $H_0$ is called a \textit{subhypergraph} of $H$ if $V(H_0)\subseteq V(H)$ and $E(H_0)\subseteq E(H)$. The \textit{adjacency tensor} \cite{Cooper} of an $r$-uniform hypergraph $H$, denoted by $\mathcal{A}_H=(a_{i_1i_2\cdots i_r})$, is an $r$-order $|V(H)|$-dimension tensor with entries
\begin{eqnarray*}
a_{i_1i_2\cdots i_r}=\begin{cases}\frac{1}{(k-1)!}~~~~~~~\mbox{if}~\{i_1,i_2,\ldots,i_r\}\in E(H),\\
0~~~~~~~~~~~~~\mbox{otherwise}.\end{cases}
\end{eqnarray*}
The \textit{spectral radius} of $H$, denoted by $\rho(H)$, is defined as the spectral radius of $\mathcal{A}_H$. Clearly, for a graph ($2$-uniform hypergraph) $G$, $\rho(G)$ is the spectral radius of the adjacency matrix of $G$. Let $\beta(H)=\frac{\rho(H)}{\rho(H^{(2)})}$ denote the \textit{spectral radius ratio} of $H$.

If a vertex subset $C$ of a graph $G$ induces a complete graph, then $C$ is called a clique in $G$. A clique with $r$ vertices is called an $r$-clique. Let $C_r(G)$ denote the set of $r$-cliques in $G$. The \textit{$r$-clique tensor} \cite{Liu} of $G$, denoted by $\mathcal{A}_r(G)=(a_{i_{1}i_{2}\cdots i_{r}})$, is an $r$-order $|V(G)|$-dimension tensor with entries
\begin{align*}
a_{i_{1}i_{2}\cdots i_{r}}=
\begin{cases}
   \frac{1}{(t-1)!}, &\{i_1,\ldots,i_r\}\in C_{t}(G).\\
   0, &\textup{otherwise}.
\end{cases}
\end{align*}
The spectral radius of $\mathcal{A}_r(G)$ is called the \textit{$r$-clique spectral radius} of $G$, denoted by $\rho_r(G)$. Let $\gamma_r(G)=\frac{\rho_r(G)}{\rho_2(H)}$, where $H$ is the subgraph of $G$ obtained by deleting all edges that are not containing in an $r$-clique. Actually, $\mathcal{A}_r(G)$ equals to the adjacency tensor of an $r$-uniform hypergraph $H_r$ with vertex set $V(H_r)=V(G)$ and edge set $E(H)=\{i_1\cdots i_r:\{i_1,\ldots,i_r\}\in C_r(G)\}$. Recent years, the research on tensor eigenvalues of graphs and hypergraphs have received extensive attention \cite{ChenBu,Clark,GaoChang,Liu_2024,YuPeng}.

The spectral extremal problems of graph and hypergraphs have been an active topic in spectral graph theory. From the research on spectral extremal problems, it is easy to see that certain subgraphs (subhypergraphs) must occur when the spectral radius or the number of edges is large enough. We will consider a localized problem: Are there some spectral conditions to guarantee the existence of certain local subgraphs (subhypergraphs) in the open neighborhood of some vertex?

The Lov\'{a}sz theta function \cite{Lovasz} $\vartheta(G)$ is a powerful tool for studying the independence number and Shannon capacity of graphs. The Schrijver theta function \cite{Schrijver} $\vartheta'(G)$ is a smaller upper bound for the independence number $\alpha(G)$, that is, $\alpha(G)\leq\vartheta'(G)\leq\vartheta(G)$. Based on vector coloring, the vector chromatic number $\chi_v(G)$ was introduced in \cite{Karger}, and it is known \cite{Balla} that $\chi_v(G)=\vartheta'(\overline{G})\leq\chi(G)$, where $\overline{G}$ is the complement of $G$, $\chi(G)$ denotes the chromatic number of $G$. An eigenvalue bound on $\chi_v(G)$ was given in \cite{Bilu}. We will investigate spectral bounds on the vector chromatic number in local induced graphs.

This paper is organized as follows. In Section 2, some definitions, notations and auxiliary lemmas are introduced. In Sections 3 and 4, we give the relationship between spectral radius and local structures of graphs and hypergraphs. We show that certain subgraphs (subhypergraphs) must occur in the open neighborhood of some vertex when $\gamma_r(G)$ ($\beta(H)$) is large enough.

In Section 5, we give spectral lower bounds on the local vector chromatic number in terms of tensor eigenvalues of graphs, which are also spectral lower bounds on the local chromatic number introduced by Erd\H{o}s et al. \cite{ErdosFuredi}.

\section{Preliminaries}

An $m$-order $n$-dimension complex tensor $\mathcal{A}=(a_{i_1\cdots i_m})$ is a multidimensional array with $n^{m}$ entries, where $i_{j}=1,2,...,n$, $j=1,2,...,m$. For a vector $x=(x_{1},\ldots,x_{n})^\top\in\mathbb{C}^{n}$, let $\mathcal{A}x^{m-1}$ denote a vector in $\mathbb{C}^{n}$ whose $i$-th component is $\sum_{i_{2},\ldots,i_{m}=1}^{n}a_{ii_{2}\cdots i_{m}}x_{i_{2}}\cdots x_{i_{m}}$ (see \cite{Qi}).
If there exist $\lambda\in\mathbb{C}$ and nonzero vector $x=(x_{1},\ldots,x_{n})^\top\in\mathbb{C}^{n}$ satisfying
\begin{align*}
\mathcal{A}x^{m-1}=\lambda x^{[m-1]},
\end{align*}
then $\lambda$ is called an \textit{eigenvalue} of $\mathcal{A}$, and $x$ is called an \textit{eigenvector} of $\mathcal{A}$ corresponding to $\lambda$, where $x^{[m-1]}=(x_{1}^{m-1},\ldots,x_{n}^{m-1})^\top$. The largest modulus of all eigenvalues of $\mathcal{A}$ is called the spectral radius of $\mathcal{A}$, denoted by $\rho(\mathcal{A})$. When $\mathcal{A}$ is a nonnegative tensor, $\rho(\mathcal{A})$ is an eigenvalue of $\mathcal{A}$, and there exists a nonnegative eigenvector associated with $\rho(\mathcal{A})$.

Let $\mathbb{R}_{+}^{n}$ denote the set of $n$-dimensional nonnegative real vectors. A tensor is called symmetric if its entries are invariant under any permutation of their indices.
\begin{lemma} \label{lem Qi} \textup{\cite{Qi2013}}
Suppose that $\mathcal{A}=(a_{i_1\cdots i_m})$ is an order $m$ dimension $n$ symmetric nonnegative tensor, with $m\geq2$. Then the spectral radius of $\mathcal{A}$ is equal to
\begin{eqnarray*}
\max\left\{\mathcal{A}x^m: \sum_{i=1}^{n}x_{i}^{m}=1, x=(x_{1},\ldots,x_{n})^\top\in\mathbb{R}_{+}^{n}\right\},
\end{eqnarray*}
where $\mathcal{A}x^m=x^\top(\mathcal{A}x^{m-1})=\sum_{i_1,\ldots,i_m=1}^{n}a_{i_1\cdots i_m}x_{i_1}\cdots x_{i_m}$.
\end{lemma}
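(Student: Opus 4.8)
The plan is to prove both inequalities between $\rho(\mathcal{A})$ and the constrained maximum $\mu := \max\{\mathcal{A}x^m : \sum_{i=1}^n x_i^m = 1,\ x \in \mathbb{R}_+^n\}$. First I would note that the feasible set $S = \{x \in \mathbb{R}_+^n : \sum_{i=1}^n x_i^m = 1\}$ is nonempty and compact and that $x \mapsto \mathcal{A}x^m$ is continuous, so the maximum $\mu$ is attained at some $x^* \in S$; moreover $\mu \geq 0$ since $\mathcal{A}\geq 0$ and $x^*\geq 0$.

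For the inequality $\mu \geq \rho(\mathcal{A})$, I would use the Perron--Frobenius fact recalled above: because $\mathcal{A}$ is nonnegative, $\rho(\mathcal{A})$ is an eigenvalue with a nonnegative eigenvector $y$. Normalizing $z = y/(\sum_{i=1}^n y_i^m)^{1/m}$ preserves the eigen-equation (it is homogeneous of the correct degree), so $z \in S$ and $\mathcal{A}z^{m-1} = \rho(\mathcal{A}) z^{[m-1]}$. Contracting with $z$ gives $\mathcal{A}z^m = z^\top(\mathcal{A}z^{m-1}) = \rho(\mathcal{A})\sum_{i=1}^n z_i^m = \rho(\mathcal{A})$, whence $\mu \geq \mathcal{A}z^m = \rho(\mathcal{A})$.

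For the reverse inequality $\mu \leq \rho(\mathcal{A})$, the idea is to show the maximizer $x^*$ is itself an eigenvector with eigenvalue $\mu$. Using symmetry of $\mathcal{A}$, one has $\nabla(\mathcal{A}x^m) = m\,\mathcal{A}x^{m-1}$ and $\nabla(\sum_{i=1}^n x_i^m) = m\,x^{[m-1]}$. Writing the stationarity conditions for maximizing $\mathcal{A}x^m$ subject to $\sum_{i=1}^n x_i^m = 1$ and $x_i \geq 0$, with multiplier $\lambda$ for the equality constraint, yields $(\mathcal{A}(x^*)^{m-1})_i = \lambda (x_i^*)^{m-1}$ at every index with $x_i^* > 0$; contracting this with $x^*$ shows $\lambda = \mathcal{A}(x^*)^m = \mu$. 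The delicate part, and the main obstacle, is the boundary indices where $x_i^* = 0$: there $(x_i^*)^{m-1} = 0$ while $(\mathcal{A}(x^*)^{m-1})_i \geq 0$ by nonnegativity. Here I would argue by a feasible perturbation $t\mapsto (x^*+te_i)/\|x^*+te_i\|_m$: since $m\geq 2$ the normalizing factor differs from $1$ only at order $t^m$, so the directional derivative of $\mathcal{A}x^m$ along this admissible $t>0$ direction is $m(\mathcal{A}(x^*)^{m-1})_i$, and optimality forces $(\mathcal{A}(x^*)^{m-1})_i \leq 0$, hence $(\mathcal{A}(x^*)^{m-1})_i = 0 = \mu (x_i^*)^{m-1}$.

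Consequently the eigen-equation $\mathcal{A}(x^*)^{m-1} = \mu (x^*)^{[m-1]}$ holds at all indices, so $\mu$ is an eigenvalue of $\mathcal{A}$; since $\mu \geq 0$ and $\rho(\mathcal{A})$ is the largest modulus among the eigenvalues, $\mu \leq \rho(\mathcal{A})$. Combining the two inequalities gives $\mu = \rho(\mathcal{A})$. Before finalizing I would check the constraint qualification underlying the multiplier rule, namely that the active-constraint gradients, $x^{[m-1]}$ together with the vectors $e_i$ for indices $i$ with $x_i^* = 0$, are linearly independent because their supports are disjoint; this legitimizes the stationarity analysis and in particular the boundary argument above.
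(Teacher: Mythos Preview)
The paper does not supply its own proof of this lemma; it is quoted from \cite{Qi2013} without argument, so there is no in-paper proof to compare against. Your argument is correct: the nonnegative Perron eigenvector recalled in the preliminaries gives $\mu\geq\rho(\mathcal{A})$ at once, and your feasible-perturbation analysis at boundary coordinates (using $m\geq2$ so that the $\ell_m$-normalization only perturbs at order $t^m$) legitimately forces $(\mathcal{A}(x^*)^{m-1})_i=0$ whenever $x_i^*=0$, whence $x^*$ is an eigenvector with eigenvalue $\mu$ and $\mu\leq\rho(\mathcal{A})$.
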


\begin{lemma}\label{lem2.2}
\textup{\cite[Theorem 5.3]{YY}} Let $\mathcal{A}$ be an $m$-order $n$-dimension nonnegative tensor. Then
\begin{eqnarray*}
\rho(\mathcal{A})=\max_{0\neq x\in\mathbb{R}_+^n}\min_{x_i>0}\frac{(\mathcal{A}x^{m-1})_i}{x_i^{m-1}}.
\end{eqnarray*}

\end{lemma}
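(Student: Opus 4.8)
The plan is to prove the two inequalities separately, writing $f(x)=\min_{x_i>0}\frac{(\mathcal{A}x^{m-1})_i}{x_i^{m-1}}$ for $0\neq x\in\mathbb{R}_+^n$. First I would dispose of $\max_x f(x)\geq\rho(\mathcal{A})$, the direction that exhibits an attaining vector. By the weak Perron--Frobenius property of nonnegative tensors recalled above, there is a nonzero $v\in\mathbb{R}_+^n$ with $\mathcal{A}v^{m-1}=\rho(\mathcal{A})\,v^{[m-1]}$; for every index $i$ with $v_i>0$ the ratio $\frac{(\mathcal{A}v^{m-1})_i}{v_i^{m-1}}$ equals $\rho(\mathcal{A})$ exactly, so $f(v)=\rho(\mathcal{A})$. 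Hence the maximum is at least $\rho(\mathcal{A})$ and is actually attained at $v$.

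The substance is the reverse bound $f(x)\leq\rho(\mathcal{A})$ for all $0\neq x\in\mathbb{R}_+^n$, which I would isolate as a key lemma: if $\mathcal{A}x^{m-1}\geq\mu x^{[m-1]}$ componentwise for some $\mu\geq0$ and nonzero $x\in\mathbb{R}_+^n$, then $\rho(\mathcal{A})\geq\mu$. This suffices, because nonnegativity of $\mathcal{A}$ forces $\mu=f(x)\geq0$, and the required inequality holds trivially on indices with $x_i=0$. To attack the key lemma I would use the monotone, degree-one positively homogeneous map $F\colon\mathbb{R}_+^n\to\mathbb{R}_+^n$ given by $F(x)_i=\big((\mathcal{A}x^{m-1})_i\big)^{1/(m-1)}$. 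With $\sigma=\mu^{1/(m-1)}$ the hypothesis reads $F(x)\geq\sigma x$, and monotonicity together with homogeneity give $F^{k}(x)\geq\sigma^{k}x$ for all $k$ by induction. Choosing $j$ with $x_j>0$ yields $\|F^{k}(x)\|_\infty\geq\sigma^{k}x_j$, hence $\liminf_k\|F^{k}(x)\|_\infty^{1/k}\geq\sigma$, while $x\leq\|x\|_\infty\mathbf{1}$ together with monotonicity and homogeneity give $\limsup_k\|F^{k}(x)\|_\infty^{1/k}\leq\lim_k\|F^{k}(\mathbf{1})\|_\infty^{1/k}$.

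The main obstacle is the identification $\lim_k\|F^{k}(\mathbf{1})\|_\infty^{1/k}=\rho(\mathcal{A})^{1/(m-1)}$, the Gelfand-type growth rate of the nonlinear Perron map of a nonnegative tensor; granting it, the two estimates above force $\sigma\leq\rho(\mathcal{A})^{1/(m-1)}$, i.e. $\mu\leq\rho(\mathcal{A})$, which is the key lemma. Because this limit is delicate precisely when $\mathcal{A}$ is reducible, an alternative and perhaps cleaner completion is a regularization that sidesteps the limit: for $\epsilon>0$ the tensor $\mathcal{A}+\epsilon\mathcal{J}$, with $\mathcal{J}$ the all-ones tensor, is positive and hence irreducible, so the classical Perron--Frobenius theorem for irreducible nonnegative tensors applies; using $\mathcal{A}+\epsilon\mathcal{J}\geq\mathcal{A}$ one gets $\rho(\mathcal{A}+\epsilon\mathcal{J})\geq\mu$, and letting $\epsilon\to0^{+}$ with the continuity and monotonicity of the spectral radius in the entries of a nonnegative tensor yields $\rho(\mathcal{A})\geq\mu$. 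Combining this reverse bound with the attaining vector from the first paragraph gives the stated max--min identity.
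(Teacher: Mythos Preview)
The paper does not supply its own proof of this lemma; it is quoted from \cite[Theorem~5.3]{YY} and used as a black box. So there is nothing in the paper to compare your argument against, and the question is simply whether your sketch is sound. Your overall plan---exhibit the Perron eigenvector for the direction $\max_x f(x)\geq\rho(\mathcal{A})$, and for the reverse bound pass to the positive perturbation $\mathcal{A}+\epsilon\mathcal{J}$, invoke the irreducible Collatz--Wielandt formula there, and let $\epsilon\to0^{+}$---is the standard route and is essentially correct.

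Two points deserve tightening. First, in the perturbation step your test vector $x$ may have zero coordinates, whereas the irreducible Collatz--Wielandt characterisation is usually stated over strictly positive vectors; observe that one iterate $y=F_\epsilon(x)$ is strictly positive (since $(\mathcal{J}x^{m-1})_i=(\sum_jx_j)^{m-1}>0$) and, by monotonicity and homogeneity of $F_\epsilon$, still satisfies $F_\epsilon(y)\geq\sigma y$, so you may apply the formula to $y$ instead of $x$. Second, the limit step requires $\rho(\mathcal{A}+\epsilon\mathcal{J})\to\rho(\mathcal{A})$ as $\epsilon\downarrow0$; monotonicity only gives $\rho(\mathcal{A}+\epsilon\mathcal{J})\geq\rho(\mathcal{A})$, which is the wrong direction, so you genuinely need continuity of the spectral radius in the tensor entries. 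That continuity is true (eigenvalues are roots of the resultant-based characteristic polynomial, whose coefficients depend polynomially on the entries), but it is not recalled in this paper and should be cited explicitly. Your first, Gelfand-limit approach carries the gap you yourself flag in the reducible case and should be dropped in favour of the perturbation argument.
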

For an $n$-vertex $r$-uniform hypergraph $H$, by Lemma \ref{lem Qi}, we know that
\begin{equation}
\rho(H)=\max\left\{\mathcal{A}_Hx^r=r\sum_{i_1\cdots i_r\in E(H)}x_{i_1}\cdots x_{i_1}: \sum_{i\in V(H)}x_i^r=1, x\in\mathbb{R}_+^n\right\}.\tag{2.1}
\end{equation}
For a vertex $u$ of $H$, the degree of $u$ is the number of edges containing $u$ in $H$. Let $\Delta(H)$ denote the maximum degree of $H$.
\begin{lemma}\label{lem2.3}
\textup{\cite[Theorem 3.8]{Cooper}} Let $H$ be an $n$-vertex $r$-uniform hypergraph. Then
\begin{eqnarray*}
\frac{r|E(H)|}{n}\leq\rho(H)\leq\Delta(H).
\end{eqnarray*}
\end{lemma}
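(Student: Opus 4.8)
The plan is to prove the two inequalities separately, using the variational formula of Lemma \ref{lem Qi} (equivalently (2.1)) for the lower bound and the eigenvalue equation directly for the upper bound.

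For the lower bound $\frac{r|E(H)|}{n}\leq\rho(H)$, I would exploit (2.1) by testing a single convenient feasible vector rather than optimizing. Taking the uniform vector $x=(n^{-1/r},\ldots,n^{-1/r})^\top\in\mathbb{R}_+^n$ satisfies the normalization $\sum_{i\in V(H)}x_i^r=n\cdot n^{-1}=1$, so it is admissible in (2.1). Substituting into $\mathcal{A}_Hx^r=r\sum_{\{i_1,\ldots,i_r\}\in E(H)}x_{i_1}\cdots x_{i_r}$ makes every summand equal to $(n^{-1/r})^r=n^{-1}$, whence $\mathcal{A}_Hx^r=r|E(H)|/n$. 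Since $\rho(H)$ is the maximum of $\mathcal{A}_Hx^r$ over all such $x$, the bound $\rho(H)\geq r|E(H)|/n$ follows at once. (Note this is exactly the average degree, which is why Lemma \ref{lem Qi} rather than the min–max formula of Lemma \ref{lem2.2} is the right tool here.)

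For the upper bound $\rho(H)\leq\Delta(H)$, I would pass to an eigenvector. Because $\mathcal{A}_H$ is nonnegative, $\rho(H)$ is an eigenvalue with an associated nonnegative eigenvector $x$, that is $\mathcal{A}_Hx^{r-1}=\rho(H)x^{[r-1]}$. Let $u$ be an index achieving $x_u=\max_i x_i$; since $x\neq 0$ we have $x_u>0$. Reading off the $u$-th coordinate of the eigenvalue equation and using the symmetry of the adjacency tensor, the $(r-1)!$ entries equal to $\frac{1}{(r-1)!}$ contributed by each edge through $u$ collapse, giving $(\mathcal{A}_Hx^{r-1})_u=\sum_{e\ni u}\prod_{v\in e\setminus\{u\}}x_v$. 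Bounding each factor by $x_v\leq x_u$ and counting at most $\Delta(H)$ edges through $u$ yields $(\mathcal{A}_Hx^{r-1})_u\leq\Delta(H)\,x_u^{r-1}$. Comparing with $(\mathcal{A}_Hx^{r-1})_u=\rho(H)\,x_u^{r-1}$ and dividing by $x_u^{r-1}>0$ gives $\rho(H)\leq\Delta(H)$.

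The only delicate point is the bookkeeping in the upper bound: one must correctly combine the factor $\frac{1}{(r-1)!}$ appearing in the tensor with the $(r-1)!$ orderings of the remaining vertices of an edge among the free indices $i_2,\ldots,i_r$, so that each edge incident to $u$ contributes precisely one product $\prod_{v\in e\setminus\{u\}}x_v$ to $(\mathcal{A}_Hx^{r-1})_u$. Everything else is a direct substitution, so beyond this counting I expect no further obstacle.
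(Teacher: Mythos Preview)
Your argument is correct in both directions, and the bookkeeping you flag for the upper bound is handled properly. Note, however, that the paper does not supply its own proof of this lemma: it is quoted verbatim from \cite[Theorem~3.8]{Cooper}, so there is nothing in the paper to compare against beyond observing that your proof is the standard one.
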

Let $K_{r+1}^+$ denote the $r$-uniform hypergraph obtained from $K_{r+1}$ by enlarging each edge of $K_{r+1}$ with $r-2$ new vertices such that distinct edges of $K_{r+1}$ are enlarged by distinct vertices. An $r$-uniform hypergraph is called \textit{linear} if every two edges have at most one common vertex.
\begin{lemma}\label{lem2.4}
\textup{\cite[Theorem 1.1]{GaoChang}} Let $H$ be a $K_{r+1}^+$-free $r$-uniform linear hypergraph with $n$ vertices. Then $\rho(H)\leq\frac{n}{r}$ when $n$ is sufficiently large.
\end{lemma}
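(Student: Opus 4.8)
The plan is to split the bound into a clean linear-algebra step that holds for every linear hypergraph and a Tur\'an-type step that carries the real content of the hypothesis.

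First I would prove the inequality $\rho(H)\le\frac{\rho(H^{(2)})}{r-1}$ for every linear $r$-uniform hypergraph $H$. Let $x\in\mathbb{R}_+^n$ be a Perron eigenvector of $\mathcal{A}_H$, so that for each vertex $i$,
\[\rho(H)\,x_i^{r-1}=(\mathcal{A}_Hx^{r-1})_i=\sum_{e\ni i}\prod_{j\in e\setminus i}x_j.\]
Applying the arithmetic--geometric mean inequality to the $r-1$ numbers $\{x_j^{r-1}:j\in e\setminus i\}$ gives $\prod_{j\in e\setminus i}x_j\le\frac{1}{r-1}\sum_{j\in e\setminus i}x_j^{r-1}$, and summing over $e\ni i$ rewrites the right-hand side as $\frac{1}{r-1}\sum_j c_{ij}x_j^{r-1}$, where $c_{ij}$ is the number of edges containing both $i$ and $j$. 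Linearity forces $c_{ij}\le1$, so $c_{ij}$ is exactly the $(i,j)$ entry of the adjacency matrix $A$ of $H^{(2)}$. Writing $z=x^{[r-1]}$, this reads $(r-1)\rho(H)\,z\le Az$ entrywise, and the Collatz--Wielandt characterisation (Lemma~\ref{lem2.2} applied to the matrix $A$) yields $\rho(A)=\rho(H^{(2)})\ge(r-1)\rho(H)$, as claimed.

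This reduces the theorem to the spectral Tur\'an estimate $\rho(H^{(2)})\le(1-\tfrac1r)n$, since then $\rho(H)\le\frac{(1-1/r)n}{r-1}=\frac{n}{r}$. The bound is best possible: if $H$ is the hypergraph of a transversal design (equivalently, $H^{(2)}$ is the complete balanced $r$-partite graph), then $H$ is linear and $K_{r+1}^+$-free, $H^{(2)}$ is $(1-\tfrac1r)n$-regular with $\rho(H^{(2)})=(1-\tfrac1r)n$, and both inequalities above hold with equality, giving $\rho(H)=n/r$; this also shows that the linear hypothesis forces $\beta(H)\le\frac{1}{r-1}$.

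The hard part will be the estimate $\rho(H^{(2)})\le(1-\tfrac1r)n$, and the obstacle is that $H^{(2)}$ need \emph{not} be $K_{r+1}$-free: a hyperedge $e$ together with any common neighbour $w$ of its vertices spans a $K_{r+1}$ in $H^{(2)}$, yet $e$ meets $\{w\}\cup e$ in $r$ vertices, so this $K_{r+1}$ is not a copy of $K_{r+1}^+$. Hence Nikiforov's spectral Tur\'an theorem cannot be quoted directly. The plan is to argue by contradiction: assuming $\rho(H^{(2)})>(1-\tfrac1r)n$, spectral supersaturation should yield $\Omega(n^{r+1})$ copies of $K_{r+1}$ in $H^{(2)}$, while the copies that are \emph{not} scattered---those in which some hyperedge contains at least three of the $r+1$ clique vertices---number only $O(|E(H)|\,n^{r-2})=O(n^{r})$, since each such copy is pinned by a hyperedge. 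For large $n$ a scattered $K_{r+1}$ therefore exists, i.e.\ $r+1$ pairwise adjacent vertices no three of which lie in a common edge; every pair is then covered by an edge meeting the core in exactly two vertices, and a sunflower/$\Delta$-system argument (again using that $n$ is large) selects these $\binom{r+1}{2}$ edges with pairwise disjoint expansion sets, producing a $K_{r+1}^+$ and the desired contradiction. I expect the quantitative supersaturation at the exact threshold (with no spectral gap to spare), together with the scattered-plus-disjoint extraction, to be where the difficulty and the hypothesis ``$n$ sufficiently large'' are concentrated.
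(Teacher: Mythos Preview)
The paper does not prove Lemma~2.4 at all: it is quoted verbatim from Gao, Chang and Hou \cite{GaoChang} and used as a black box, so there is no ``paper's own proof'' to compare your proposal against.

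On the merits of your outline, your first step is entirely correct: the AM--GM/Collatz--Wielandt argument does give $\rho(H)\le\rho(H^{(2)})/(r-1)$ for every linear $r$-uniform $H$, and this is a clean reduction. The difficulties are all in your second step, and the gaps there are real, not merely technical. First, the supersaturation claim ``$\rho(H^{(2)})>(1-\tfrac1r)n$ forces $\Omega(n^{r+1})$ copies of $K_{r+1}$'' is not a theorem at the exact threshold; standard spectral supersaturation needs a gap $\rho>(1-\tfrac1r+\varepsilon)n$, and without one you may get only a single $K_{r+1}$ (or a polylogarithmic blow-up via Nikiforov's theorem), which is far too little to dominate the $O(n^{r})$ non-scattered copies. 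Second, your ``sunflower/$\Delta$-system'' step is misapplied: once you fix a scattered core of $r+1$ vertices, linearity \emph{determines} the $\binom{r+1}{2}$ covering hyperedges uniquely, so there is nothing to select from---the expansion sets either happen to be disjoint or they do not. To rescue this you would have to count over cores and show most of them yield disjoint expansions, which you have not done and which again needs the supersaturation count you do not have. More fundamentally, it is not clear that the intermediate inequality $\rho(H^{(2)})\le(1-\tfrac1r)n$ you are aiming for is even true for all linear $K_{r+1}^+$-free $H$: the first step is generally strict, so your two-step decomposition may simply be too lossy to reach the sharp bound $n/r$.
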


\begin{lemma}\label{lem2.5}
\textup{\cite{Liu}} For a graph $G$ on $n$ vertices, we have
\begin{eqnarray*}
|C_r(G)|\leq\frac{n}{r}\rho_r(G).
\end{eqnarray*}
\end{lemma}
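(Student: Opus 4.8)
The plan is to derive the bound as a direct consequence of the identification, noted just above the statement, of the $r$-clique tensor $\mathcal{A}_r(G)$ with the adjacency tensor of the $r$-uniform hypergraph $H_r$ on vertex set $V(G)$ whose edges are precisely the $r$-cliques of $G$. Under this identification one has $\rho_r(G)=\rho(H_r)$, the number of vertices of $H_r$ equals $n$, and $|E(H_r)|=|C_r(G)|$ since each $r$-clique of $G$ contributes exactly one edge to $H_r$. Once these three bookkeeping facts are in place, the statement is essentially immediate.

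With this setup, the inequality follows at once from the lower bound in Lemma \ref{lem2.3}. Applying $\frac{r|E(H)|}{n}\leq\rho(H)$ to $H=H_r$ gives
$$\frac{r|C_r(G)|}{n}=\frac{r|E(H_r)|}{n}\leq\rho(H_r)=\rho_r(G),$$
and rearranging yields $|C_r(G)|\leq\frac{n}{r}\rho_r(G)$, as desired.

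If one prefers a self-contained argument that avoids invoking Lemma \ref{lem2.3}, the same conclusion can be reached directly from the variational characterization in Lemma \ref{lem Qi}. Taking the uniform test vector $x=n^{-1/r}(1,\ldots,1)^\top$, which satisfies $\sum_{i}x_i^r=1$ and $x\in\mathbb{R}_+^n$, one computes $\mathcal{A}_r(G)x^r=\frac{1}{n}\sum_{i_1,\ldots,i_r}a_{i_1\cdots i_r}$. Each $r$-clique is counted by its $r!$ orderings, and each ordered tuple contributes the entry value $\frac{1}{(r-1)!}$, so the sum equals $r!\cdot\frac{1}{(r-1)!}\cdot|C_r(G)|=r\,|C_r(G)|$; hence $\rho_r(G)\geq\mathcal{A}_r(G)x^r=\frac{r|C_r(G)|}{n}$, which is the claim.

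Since both routes are short, there is no substantive obstacle here; the only point requiring a little care is the combinatorial bookkeeping that each unordered $r$-clique corresponds to $r!$ ordered index tuples and to a single edge of $H_r$. This is exactly what produces the factor $r$ (rather than $r!$ or $\frac{1}{(r-1)!}$) in the final estimate, and it is also the step where one should check that non-distinct index tuples contribute nothing, so that the counting is clean.
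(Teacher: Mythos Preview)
Your argument is correct. Note, however, that the paper does not supply its own proof of this lemma: it is quoted from \cite{Liu} as a preliminary result, so there is no in-paper proof to compare against. That said, your derivation via the identification $\rho_r(G)=\rho(H_r)$ together with the lower bound in Lemma~\ref{lem2.3} is exactly the argument the paper's setup invites, and your alternative route through Lemma~\ref{lem Qi} with the uniform test vector is equally valid and self-contained.
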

The \textit{clique number} of a graph $G$, denoted by $\omega(G)$, is the size of the maximum clique in $G$.
\begin{lemma}\label{lem2.6}
\textup{\cite{LiuBu}} Let $G$ be an $n$-vertex graph. For any positive integer $t\leq\omega(G)$, we have
\begin{equation*}
\max\left\{\sum_{\{i_1,\ldots,i_t\}\in C_t(G)}x_{i_1}\cdots x_{i_t}:\sum_{i=1}^nx_i=1,x\in\mathbb{R}^n_+\right\}={\omega(G)\choose t}\omega(G)^{-t}.
\end{equation*}
\end{lemma}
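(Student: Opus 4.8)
The plan is to view this identity as a clique analogue of the Motzkin--Straus theorem, which is the case $t=2$ (there $\binom{\omega}{2}\omega^{-2}=\frac{1}{2}(1-\frac{1}{\omega})$). Write $\omega=\omega(G)$ and let $f(x)=\sum_{\{i_1,\ldots,i_t\}\in C_t(G)}x_{i_1}\cdots x_{i_t}$ denote the objective. The lower bound is immediate: fixing a maximum clique $K$ with $|K|=\omega$ and setting $x_i=1/\omega$ for $i\in K$ and $x_i=0$ otherwise, every $t$-subset of $K$ is a $t$-clique, so $f(x)=\binom{\omega}{t}\omega^{-t}$. Since the feasible region is the standard simplex, which is compact, and $f$ is continuous, a maximizer $x^{*}$ exists; it then remains to bound $f(x^{*})$ from above by $\binom{\omega}{t}\omega^{-t}$.

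First I would show that the maximizer can be taken to be supported on a clique. The key observation is that if $i$ and $j$ lie in the support but are non-adjacent, then no $t$-clique contains both, so $f$ is jointly affine in $(x_i,x_j)$: one may write $f=x_iA_i+x_jA_j+B$ with $A_i,A_j,B$ independent of $x_i$ and $x_j$. Holding $x_i+x_j$ and all other coordinates fixed, $f$ is linear in $x_i$, so its maximum over the resulting segment is attained at an endpoint; moving all the mass to whichever of $i,j$ carries the larger coefficient does not decrease $f$ and zeroes out one coordinate. Iterating this finite process produces a maximizer whose support $K$ is a clique; since $t\le\omega$ the maximum value is positive, so $|K|=k\ge t$.

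On such a clique every $t$-subset is a $t$-clique, so $f$ restricted to the support is exactly the elementary symmetric polynomial $e_t$ in the $k$ positive variables $\{x_i:i\in K\}$, subject to $e_1=\sum_{i\in K}x_i=1$. Here I would invoke Maclaurin's inequality, $\bigl(e_t/\binom{k}{t}\bigr)^{1/t}\le e_1/k$, which gives $e_t\le\binom{k}{t}k^{-t}$ with equality when all variables equal $1/k$. Finally, writing $\binom{k}{t}k^{-t}=\frac{1}{t!}\prod_{j=0}^{t-1}\bigl(1-\frac{j}{k}\bigr)$ exhibits this quantity as increasing in $k$, so over $t\le k\le\omega$ it is maximized at $k=\omega$, yielding $f(x^{*})\le\binom{\omega}{t}\omega^{-t}$ and matching the lower bound. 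I expect the main obstacle to be making the support-reduction step fully rigorous: in particular, verifying that two non-adjacent support vertices never co-occur in a $t$-clique (so that the joint affinity of $f$ genuinely holds) and that the iteration terminates with a clique of size at least $t$ rather than collapsing the objective to zero.
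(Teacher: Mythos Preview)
The paper does not prove this lemma; it is quoted from \cite{LiuBu} without proof, so there is no ``paper's own proof'' to compare against. Your argument is correct and self-contained. The support-reduction step goes through exactly as you sketch: since a $t$-clique is in particular a clique, two non-adjacent vertices can never both lie in one, so no monomial of $f$ contains both $x_i$ and $x_j$ and $f$ is genuinely linear in $x_i$ along the segment $x_i+x_j=\text{const}$; each shift strictly shrinks the support, so the process terminates on a clique $K$; and the final value of $f$ equals the maximum, which you have already shown is at least $\binom{\omega}{t}\omega^{-t}>0$, forcing $|K|\ge t$. The remaining steps---Maclaurin's inequality and the monotonicity of $\binom{k}{t}k^{-t}=\frac{1}{t!}\prod_{j=0}^{t-1}(1-j/k)$ in $k$---are standard and correctly applied. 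This is precisely the natural extension of the Motzkin--Straus argument to general $t$, and is presumably close to what \cite{LiuBu} does.
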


An \textit{orthonormal representation} of an $n$-vertex graph $G$ is a set $\{u_1,\ldots,u_n\}$ of unit real vectors such that $u_i^\top u_j=0$ if $i$ and $j$ are two nonadjacent vertices in $G$. For an orthonormal representation $f=\{u_1,\ldots,u_n\}$ of a graph $G$, its Gram matrix $M_f$ is the positive semidefinite matrix such that $(M_f)_{ij}=u_i^\top u_j$. The orthonormal representation $f$ is called nonnegative if $u_i^\top u_j\geq0$ for any $i,j\in V(G)$.

The following spectral characterization of the Schrijver theta function $\vartheta'(G)$ follows from the equation (A.6) in \cite{Acin}.
\begin{lemma}\label{lem2.7}
\textup{\cite{Acin}} For any graph $G$, $\vartheta'(G)$ is the maximum, over all nonnegative orthonormal representations $f$ of the complement graph $\overline{G}$, of the largest eigenvalue of the Gram matrix $M_f$.
\end{lemma}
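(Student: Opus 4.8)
My plan is to derive the identity directly from the primal semidefinite-programming characterization of the Schrijver function, namely
\[
\vartheta'(G)=\max\Big\{\langle J,X\rangle : \operatorname{tr}(X)=1,\ X\succeq 0,\ X\ge 0,\ X_{ij}=0\ \forall\,\{i,j\}\in E(G)\Big\},
\]
where $J$ is the all-ones matrix, $X\ge 0$ means entrywise nonnegative, and $X\succeq 0$ means positive semidefinite. The first step is to record the dictionary between the two sides: a matrix $M$ is the Gram matrix $M_f$ of a nonnegative orthonormal representation $f=\{u_1,\ldots,u_n\}$ of $\overline{G}$ if and only if $M\succeq 0$, $M_{ii}=1$ for all $i$, $M\ge 0$ entrywise, and $M_{ij}=0$ whenever $\{i,j\}\in E(G)$ (orthogonality for the non-edges of $\overline{G}$, i.e.\ the edges of $G$). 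With this translation the lemma becomes the assertion that the above SDP value equals $\max_M\lambda_{\max}(M)$ over all such $M$, and I would prove the two inequalities separately as inverse substitutions.

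For the inequality $\vartheta'(G)\ge\lambda_{\max}(M)$ I start from a feasible $M$. Since $M\ge 0$ entrywise, the Perron--Frobenius theorem supplies a unit eigenvector $v\ge 0$ with $Mv=\lambda_{\max}(M)\,v$. Setting $X=D_vMD_v$ with $D_v=\operatorname{diag}(v)$ produces a feasible point of the SDP: congruence preserves positive semidefiniteness, the entrywise product of nonnegatives is nonnegative, the zero pattern of $M$ on $E(G)$ is inherited, and $\operatorname{tr}(X)=\sum_i v_i^2 M_{ii}=\|v\|^2=1$. A direct computation gives $\langle J,X\rangle=v^\top M v=\lambda_{\max}(M)$, so the SDP value is at least $\lambda_{\max}(M)$; maximizing over $M$ yields one inequality.

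For the reverse inequality I invert this substitution. Let $X$ be an optimal solution and put $S=\{i:X_{ii}>0\}$. Because $X\succeq 0$, every index with zero diagonal entry has an identically zero row and column, so $X$ is supported on $S\times S$. On $S$ I define $M=D^{-1/2}XD^{-1/2}$ with $D=\operatorname{diag}(X_{ii})_{i\in S}$; this matrix is positive semidefinite with unit diagonal, entrywise nonnegative, and vanishes on $E(G)$. To obtain a representation of all of $\overline{G}$ I extend $M$ by assigning each vertex outside $S$ a private unit vector orthogonal to everything, i.e.\ I pad $M$ with an identity block, which keeps it a valid nonnegative Gram matrix and does not decrease its largest eigenvalue. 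Testing with $w$ defined by $w_i=\sqrt{X_{ii}}$ on $S$ gives $\|w\|^2=\operatorname{tr}(X)=1$ and $w^\top M w=\mathbf{1}_S^\top X\mathbf{1}_S=\langle J,X\rangle$, whence $\lambda_{\max}(M)\ge w^\top M w=\vartheta'(G)$.

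The only delicate point, and the step I expect to be the main obstacle, is the degeneracy caused by vertices with $X_{ii}=0$: the substitution $M=D^{-1/2}XD^{-1/2}$ is undefined there, so one must first argue that such rows of $X$ vanish and then legitimately extend the representation to the full vertex set of $\overline{G}$ without creating negative inner products or destroying the edge-orthogonality. The remaining ingredients, the Perron--Frobenius choice of a nonnegative eigenvector and the Rayleigh-quotient bound $\lambda_{\max}(M)\ge w^\top M w/\|w\|^2$, are routine.
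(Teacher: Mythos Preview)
The paper does not actually prove this lemma: it is quoted verbatim as a known fact, with the sentence ``The following spectral characterization of the Schrijver theta function $\vartheta'(G)$ follows from the equation (A.6) in \cite{Acin}'' and no argument given. So there is no paper proof to compare against; you have supplied a genuine proof where the authors simply cite the literature.

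Your argument is correct. The dictionary between Gram matrices of nonnegative orthonormal representations of $\overline{G}$ and the feasible set of Schrijver's primal SDP is exactly as you describe, and both substitutions go through. In the direction $\vartheta'(G)\ge\lambda_{\max}(M)$ you implicitly use that for a symmetric entrywise-nonnegative matrix the Perron--Frobenius eigenvalue coincides with the top eigenvalue, which is true since the spectrum is real and $\rho(M)$ is attained. In the reverse direction, the point you flagged as delicate, vertices with $X_{ii}=0$, is handled correctly: positive semidefiniteness forces those rows and columns of $X$ to vanish, and padding $M$ by an identity block on $V\setminus S$ both preserves the required zero pattern on $E(G)$ (all new off-diagonal entries are zero) and does not lower $\lambda_{\max}$ (the new eigenvalues are all equal to $1\le\lambda_{\max}(M_S)$, since $\operatorname{tr}(M_S)=|S|$). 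The Rayleigh-quotient test with $w_i=\sqrt{X_{ii}}$ then recovers $\langle J,X\rangle$ exactly.
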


Given a graph $G$ and a real number $k\geq2$,  a vector $k$-coloring of $G$ is an assignment of unit vectors $u_i$ to each vertex $i\in V(G)$ such that $u_i^\top u_j\leq-\frac{1}{k-1}$ for all edge $ij\in E(G)$. The \textit{vector chromatic number} \cite{Karger} $\chi_v(G)$ is the smallest real number $k$ for which a vector $k$-coloring exists.
\begin{lemma}\label{lem2.8}
\textup{\cite{Balla}} For any graph $G$, we have $\chi_v(G)=\vartheta'(\overline{G})$.
\end{lemma}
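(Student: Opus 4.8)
\section*{Proof proposal for Lemma \ref{lem2.8}}

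The plan is to read the claimed identity $\chi_v(G)=\vartheta'(\overline{G})$ as a semidefinite-programming duality and to prove the two inequalities $\vartheta'(\overline{G})\le\chi_v(G)$ and $\chi_v(G)\le\vartheta'(\overline{G})$ separately. First I would restate both sides as optimizations over Gram matrices. Using the characterization of Lemma \ref{lem2.7}, $\vartheta'(\overline{G})$ is the maximum of $\lambda_{\max}(M)$ over all symmetric $M\succeq0$ with $M_{ii}=1$, $M_{ij}\ge0$ for all $i,j$, and $M_{ij}=0$ whenever $i\ne j$ are nonadjacent in $G$ (the Gram matrix of a nonnegative orthonormal representation of $\overline{\overline{G}}=G$). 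Unwinding the definition, $\chi_v(G)=1-1/s^{*}$, where $s^{*}=\min\{t:\ N\succeq0,\ N_{ii}=1,\ N_{ij}\le t\text{ for all }ij\in E(G)\}$ is the least attainable value of $\max_{ij\in E(G)}v_i^{\top}v_j$ over unit-vector assignments $\{v_i\}$.

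For the inequality $\vartheta'(\overline{G})\le\chi_v(G)$ (weak duality), I would combine an optimal vector $k$-coloring with an arbitrary nonnegative orthonormal representation through a Kronecker-product trick. Let $\{v_i\}$ be a vector $k$-coloring of $G$ and let $\{u_i\}$ be a nonnegative orthonormal representation of $G$ with Gram matrix $M$; set $w_i=v_i\otimes u_i$, so that $w_i^{\top}w_j=(v_i^{\top}v_j)M_{ij}$. Testing against a Perron eigenvector $\phi\ge0$, $\|\phi\|=1$, of the nonnegative matrix $M$, I would expand
\begin{align*}
0\le\Big\|\sum_i\phi_iw_i\Big\|^2=\sum_i\phi_i^2+\sum_{i\ne j}\phi_i\phi_j(v_i^{\top}v_j)M_{ij}.
\end{align*}
Since $M_{ij}=0$ on nonadjacent pairs while $v_i^{\top}v_j\le-\tfrac{1}{k-1}$ and $\phi_i\phi_jM_{ij}\ge0$ on edges, the off-diagonal sum is at most $-\tfrac{1}{k-1}(\phi^{\top}M\phi-1)=-\tfrac{1}{k-1}(\lambda_{\max}(M)-1)$, which forces $\lambda_{\max}(M)\le k$. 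Maximizing over $M$ and minimizing over $k$ yields $\vartheta'(\overline{G})\le\chi_v(G)$; here the nonnegativity of both $\phi$ and $M$ is exactly what lets the edge inequality survive, and it is the feature that singles out $\vartheta'$ rather than $\vartheta$.

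For the reverse inequality $\chi_v(G)\le\vartheta'(\overline{G})$ I would invoke strong SDP duality. After the change of variables $k=1-1/t$, the two Gram-matrix programs above can be arranged as a primal--dual pair (the one-sided edge inequality produces a sign-constrained multiplier, which is precisely what transports the nonnegativity distinguishing $\vartheta'$ from $\vartheta$), so it suffices to verify strict feasibility and conclude that the optimal values coincide; equivalently, one constructs an optimal vector $\vartheta'(\overline{G})$-coloring directly from an optimal nonnegative orthonormal representation by complementary slackness. I expect this to be the main obstacle. The weak-duality half is an elementary positivity computation, but establishing tightness requires careful SDP bookkeeping: one must confirm that the nonnegativity constraints are carried correctly to the dual and must exhibit a genuine interior (Slater) point to rule out a duality gap. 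Reconciling the boundary conventions — for instance the edgeless graph, where the normalization $k\ge2$ must be squared with $\vartheta'(\overline{G})=1$ — is a secondary technical point.
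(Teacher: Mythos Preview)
The paper does not prove Lemma~\ref{lem2.8}; it is quoted as a known fact with a citation to \cite{Balla} (and the underlying result traces back to the SDP characterizations of Schrijver and of Karger--Motwani--Sudan). So there is no in-paper argument to compare against; your proposal is strictly more than what the paper supplies.

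As for the proposal itself, the weak-duality half is clean and correct: the Kronecker-product pairing of a vector $k$-coloring with a nonnegative orthonormal representation of $G$, tested against a Perron eigenvector of the Gram matrix, gives $\lambda_{\max}(M)\le k$ exactly as you wrote, and Lemma~\ref{lem2.7} then yields $\vartheta'(\overline{G})\le\chi_v(G)$. For the reverse inequality, your plan to set up a primal--dual SDP pair and invoke strong duality is the standard route, but as written it is only a sketch: you would still need to write down the two programs explicitly, check that the sign constraint on the edge multipliers really matches the entrywise nonnegativity in the $\vartheta'$ formulation, and exhibit a strictly feasible point (e.g.\ a scaled identity perturbation) to justify Slater's condition. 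None of this is hard, but it is where any actual error would hide, and your proposal correctly flags it as the main obstacle. The edgeless-graph boundary case is a convention issue (one normally sets $\chi_v(G)=1$ there) rather than a genuine difficulty.
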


\section{Local structure and spectral radius of hypergraphs}
For a vertex $u$ of an $r$-uniform hypergraph $H$, let $N(u)=\{v:vu\in E(H^{(2)})\}$ denote the open neighborhood of $u$ in the $2$-section graph $H^{(2)}$. Let $H(u)$ denote the $(r-1)$-uniform hypergraph with vertex set $N(u)$ and edge set $\{e\setminus\{u\}:e\in E_u(H)\}$, where $E_u(H)$ denotes the set of edges containing $u$ in $H$.

The spectral radius ratio $\beta(H)=\frac{\rho(H)}{\rho(H^{(2)})}$ and local hypergraph $H(u)$ have the following relationship.
\begin{theorem}\label{thm3.1}
Let $H$ be an $r$-uniform hypergraph ($r\geq3$) such that $|E(H)|\neq\emptyset$. Then there exists a vertex $u$ such that
\begin{eqnarray*}
\rho(H(u))\geq(r-1)\beta(H)
\end{eqnarray*}
and $H$ has $p=\lceil(r-1)\alpha(H)\rceil\geq\left\lceil\frac{r(r-1)|E(H)|}{|V(H)|\rho(H^{(2)})}\right\rceil$ distinct edges $e_1,\ldots,e_p$ such that $u\in e_1\cap\cdots\cap e_p$ and
\begin{eqnarray*}
|e_1\cap\cdots\cap e_p|\geq2.
\end{eqnarray*}
\end{theorem}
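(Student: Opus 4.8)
The plan is to run a two-stage argument: first locate a vertex $u$ whose local hypergraph $H(u)$ carries large spectral radius, then read off the edge structure from a maximum-degree vertex of $H(u)$. Let $x\in\mathbb{R}_+^{|V(H)|}$ be a nonnegative Perron eigenvector for $\rho(H)$. Expanding the eigenvalue equation $\mathcal{A}_Hx^{r-1}=\rho(H)x^{[r-1]}$ at a vertex $u$, and using that each edge $e\ni u$ contributes $(r-1)!$ equal ordered terms that cancel the entry $\frac{1}{(r-1)!}$, I obtain
\[
\sum_{e\in E_u(H)}\prod_{v\in e\setminus\{u\}}x_v=\rho(H)\,x_u^{r-1}.
\]
This identity ties the Perron weights around $u$ to $\rho(H)$ and is the engine for a variational lower bound on $\rho(H(u))$.

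Next I would select the right $u$. Put $z_v=x_v^{r-1}$ and let $B$ be the adjacency matrix of $H^{(2)}$. The matrix case $m=2$ of the min-max formula in Lemma~\ref{lem2.2} gives $\min_{z_v>0}(Bz)_v/z_v\le\rho(H^{(2)})$, so there is a vertex $u$ with $z_u>0$ and $\sum_{v\in N(u)}z_v=(Bz)_u\le\rho(H^{(2)})\,z_u$. For this $u$, set $S=\sum_{v\in N(u)}z_v$ and feed the normalized test vector $y=x|_{N(u)}/S^{1/(r-1)}$ into the variational characterization (2.1) for the $(r-1)$-uniform hypergraph $H(u)$; since its edges are exactly the sets $e\setminus\{u\}$ with $e\in E_u(H)$, the identity above yields
\[
\rho(H(u))\ge(r-1)\frac{1}{S}\sum_{e\in E_u(H)}\prod_{v\in e\setminus\{u\}}x_v=(r-1)\,\rho(H)\,\frac{z_u}{S}\ge(r-1)\frac{\rho(H)}{\rho(H^{(2)})}=(r-1)\beta(H).
\]
One checks that $z_u>0$ forces $S>0$ (otherwise the eigenvalue identity would give $x_u=0$), so the quotient is legitimate and $H(u)$ is nonempty. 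This establishes the first displayed inequality; the stated lower bound on $p$ then follows by inserting the edge bound $\rho(H)\ge r|E(H)|/|V(H)|$ of Lemma~\ref{lem2.3} into $\beta(H)$ and taking ceilings.

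Finally I would extract the edges. Because $H(u)$ is $(r-1)$-uniform with $r-1\ge2$, Lemma~\ref{lem2.3} gives $\Delta(H(u))\ge\rho(H(u))\ge(r-1)\beta(H)$; as $\Delta(H(u))$ is an integer this forces $\Delta(H(u))\ge\lceil(r-1)\beta(H)\rceil=p$. Hence some $w\in N(u)$ lies in at least $p$ edges $f_1,\dots,f_p$ of $H(u)$, and setting $e_i=f_i\cup\{u\}$ produces $p$ distinct edges of $H$ all containing both $u$ and $w$, so $\{u,w\}\subseteq e_1\cap\cdots\cap e_p$ and $|e_1\cap\cdots\cap e_p|\ge2$. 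I expect the only genuinely delicate point to be the vertex selection of the second paragraph: everything hinges on the matrix min-max inequality producing a vertex where the Perron weight $z_u$ dominates its neighborhood sum by the factor $\rho(H^{(2)})$, which is exactly what converts the global ratio $\beta(H)$ into a local spectral lower bound; the remaining steps are then routine applications of Lemmas~\ref{lem2.2} and~\ref{lem2.3}.
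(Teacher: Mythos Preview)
Your proposal is correct and follows essentially the same route as the paper: take a nonnegative Perron eigenvector $x$, set $z_v=x_v^{r-1}$, use Lemma~\ref{lem2.2} on the adjacency matrix of $H^{(2)}$ to select $u$ with $\sum_{v\in N(u)}z_v\le\rho(H^{(2)})z_u$, bound $\rho(H(u))$ from below via the variational formula (2.1), and then read off the edges from a maximum-degree vertex of $H(u)$ using Lemma~\ref{lem2.3}. The only cosmetic difference is that you normalize the test vector before invoking (2.1) while the paper leaves it unnormalized and divides afterwards; the content is identical.
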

\begin{proof}
Let $x$ be a nonnegative eigenvector associated with $\rho(H)$, then $\mathcal{A}_Hx^{r-1}=\rho(H)x^{[r-1]}$. So for any vertex $i$ of $H$, we have
\begin{eqnarray*}
\rho(H)x_i^{r-1}=(\mathcal{A}_Hx^{r-1})_i=\sum_{j_1\cdots j_{r-1}i\in E(H)}x_{j_1}\cdots x_{j_{r-1}}.
\end{eqnarray*}
Let $y$ be the $|N(i)|$-dimension vector such that $y_j=x_j$ for all $j\in N(i)$. Then by the equation (2.1), we obtain
\begin{eqnarray*}
\rho(H)x_i^{r-1}&=&\sum_{j_1\cdots j_{r-1}i\in E(H)}x_{j_1}\cdots x_{j_{r-1}}=(r-1)^{-1}\mathcal{A}_{H(i)}y^{r-1}\\
&\leq&(r-1)^{-1}\rho(H(i))\sum_{j\in N(i)}x_j^{r-1}.
\end{eqnarray*}
Let $z$ be the $|V(G)|$-dimension nonnegative vector such that $z_j=x_j^{r-1}$ for all $j\in V(G)$. Then $\sum_{j\in N(i)}z_j=(\mathcal{A}_{H^{(2)}}z)_i$, where $\mathcal{A}_{H^{(2)}}$ is the adjacency matrix of $H^{(2)}$. By Lemma \ref{lem2.2}, we know that there exists a vertex $u$ such that
\begin{eqnarray*}
x_u^{-(r-1)}\sum_{j\in N(u)}x_j^{r-1}=z_u^{-1}\sum_{j\in N(u)}z_j\leq\rho(H^{(2)}).
\end{eqnarray*}
Hence
\begin{eqnarray*}
\rho(H)&\leq&(r-1)^{-1}\rho(H(u))\rho(H^{(2)}),\\
\rho(H(u))&\geq&\frac{(r-1)\rho(H)}{\rho(H^{(2)})}=(r-1)\beta(H).
\end{eqnarray*}
By Lemma \ref{lem2.3}, we know that the maximum degree $\Delta$ of $H(u)$ satisfies
\begin{eqnarray*}
\Delta\geq\rho(H(u))\geq(r-1)\beta(H).
\end{eqnarray*}
So we get
\begin{eqnarray*}
\Delta\geq p=\lceil(r-1)\beta(H)\rceil.
\end{eqnarray*}
Let $v$ a vertex with maximum degree in $H(u)$, then $H$ has $p$ distinct edges $e_1,\ldots,e_p$ such that
\begin{eqnarray*}
\{u,v\}\subseteq e_1\cap\cdots\cap e_p.
\end{eqnarray*}
By Lemma \ref{lem2.3}, we have
\begin{eqnarray*}
p=\left\lceil\frac{(r-1)\rho(H)}{\rho(H^{(2)})}\right\rceil\geq\left\lceil\frac{r(r-1)|E(H)|}{|V(H)|\rho(H^{(2)})}\right\rceil.
\end{eqnarray*}
\end{proof}
For an $r$-uniform hypergraph $F$, let $K_1\ast F$ denote the $(r+1)$-uniform hypergraph with vertex set $V(F)\cup\{x\}$ and edge set $\{xu_1\cdots u_r:u_1\cdots u_r\in E(F)\}$. For a set $\mathcal{H}$ of $r$-uniform hypergraphs, let $spex_\mathcal{H}(n,F)$ denote the maximum spectral radius of an $n$-vertex $F$-free hypergraph in $\mathcal{H}$.
\begin{theorem}\label{thm3.2}
Let $H$ be an $(r+1)$-uniform hypergraph ($r\geq2$) with at least one edge, and let $\mathcal{H}$ be a set of $r$-uniform hypergraphs such that $H(i)\in\mathcal{H}$ for each vertex $i\in V(H)$. Suppose that $F$ is an $r$-uniform hypergraph without isolated vertices. Then $K_1\ast F$ is a subhypergraph of $H$ if one the following holds:\\
(1) $\beta(H)>r^{-1}spex_\mathcal{H}\left(\max_{i\in V(H)}|N(i)|,F\right)$.\\
(2) $(r+1)|E(H)|>r^{-1}|V(H)|\rho(H^{(2)})spex_\mathcal{H}\left(\max_{i\in V(H)}|N(i)|,F\right)$.\\
(3) $\rho(H)>r^{-1}\Delta(H^{(2)})spex_\mathcal{H}\left(\max_{i\in V(H)}|N(i)|,F\right)$.
\end{theorem}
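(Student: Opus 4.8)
The plan is to reduce all three conditions to a single structural observation relating $K_1\ast F$ to the local hypergraphs $H(i)$, and then to invoke Theorem \ref{thm3.1}. First I would establish the key equivalence: $K_1\ast F$ is a subhypergraph of $H$ if and only if there is a vertex $w\in V(H)$ whose link $H(w)$ contains $F$ as a subhypergraph. Every edge of $K_1\ast F$ contains the apex $x$, so if $K_1\ast F\subseteq H$ with $x$ mapped to $w$, then deleting $w$ from each of the images of these edges produces a copy of $F$ inside $H(w)$; conversely, a copy of $F$ in some $H(w)$ is completed to a copy of $K_1\ast F$ in $H$ by reinstating $w$ in each edge. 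Contrapositively, if $K_1\ast F\not\subseteq H$, then $H(i)$ is $F$-free for every $i\in V(H)$.

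To prove (1) I would argue by contradiction. Assuming $K_1\ast F\not\subseteq H$, each link $H(i)$ is an $F$-free member of $\mathcal{H}$ on $|N(i)|$ vertices, so by the definition of the extremal function $\rho(H(i))\leq spex_\mathcal{H}(|N(i)|,F)\leq spex_\mathcal{H}(\max_{j}|N(j)|,F)$. On the other hand, $H$ is $(r+1)$-uniform with $r+1\geq3$ and has an edge, so Theorem \ref{thm3.1} applied with uniformity $r+1$ supplies a vertex $u$ with $\rho(H(u))\geq\big((r+1)-1\big)\beta(H)=r\beta(H)$. Chaining these two bounds gives $r\beta(H)\leq spex_\mathcal{H}(\max_j|N(j)|,F)$, which contradicts the hypothesis of (1); hence $K_1\ast F\subseteq H$.

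Next I would derive (2) and (3) as consequences of (1). For (2), Lemma \ref{lem2.3} applied to the $(r+1)$-uniform $H$ gives $\rho(H)\geq\frac{(r+1)|E(H)|}{|V(H)|}$, whence $\beta(H)=\frac{\rho(H)}{\rho(H^{(2)})}\geq\frac{(r+1)|E(H)|}{|V(H)|\rho(H^{(2)})}$; the assumption in (2) is precisely that this lower bound exceeds $r^{-1}spex_\mathcal{H}(\max_i|N(i)|,F)$, so the hypothesis of (1) is met. For (3), using the standard bound $\rho(H^{(2)})\leq\Delta(H^{(2)})$ for the graph $H^{(2)}$ yields $\beta(H)\geq\frac{\rho(H)}{\Delta(H^{(2)})}$, and the assumption in (3) again forces the hypothesis of (1). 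In both cases the conclusion follows from the already-established part.

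The step requiring the most care is the monotonicity inequality $spex_\mathcal{H}(|N(u)|,F)\leq spex_\mathcal{H}(\max_i|N(i)|,F)$, needed to pass from the specific link $H(u)$ produced by Theorem \ref{thm3.1} to a uniform bound in terms of $\max_i|N(i)|$. This amounts to padding an $F$-free member of $\mathcal{H}$ with isolated vertices without leaving $\mathcal{H}$ and without creating a copy of $F$. The second requirement is automatic, since $F$ has no isolated vertices and therefore any embedded copy would avoid the added vertices; the first holds under the natural closure of $\mathcal{H}$ under adding isolated vertices, and I would state this explicitly where the extremal function is invoked.
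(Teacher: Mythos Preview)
Your proposal is correct and follows essentially the same route as the paper: apply Theorem~\ref{thm3.1} (with uniformity $r+1$) to obtain a vertex $u$ with $\rho(H(u))\ge r\beta(H)$, compare this against $spex_{\mathcal{H}}(\max_i|N(i)|,F)$ to force $F\subseteq H(u)$, and then reduce (2) and (3) to (1) via Lemma~\ref{lem2.3} and the bound $\rho(H^{(2)})\le\Delta(H^{(2)})$. Your contrapositive framing of (1) is logically equivalent to the paper's direct argument, and your explicit discussion of the monotonicity of $spex_{\mathcal{H}}$ in $n$ (and the role of the ``no isolated vertices'' hypothesis on $F$) supplies justification for a step the paper uses without comment.
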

\begin{proof}
If $\beta(H)>r^{-1}spex_\mathcal{H}\left(\max_{i\in V(H)}|N(i)|,F\right)$, then by Theorem \ref{thm3.1}, there exists a vertex $u$ such that
\begin{eqnarray*}
\rho(H(u))\geq r\beta(H)>spex_\mathcal{H}\left(\max_{i\in V(H)}|N(i)|,F\right)\geq spex_\mathcal{H}(|N(u)|,F).
\end{eqnarray*}
Since $H(u)\in\mathcal{H}$ and $H(u)$ has $|N(u)|$ vertices, $F$ is a subhypergraph of $H(u)$, that is, $K_1\ast F$ is a subhypergraph of $H$. Hence part (1) holds.

By Lemma \ref{lem2.3}, we know that $\beta(H)=\frac{\rho(H)}{\rho(H^{(2)})}\geq\frac{(r+1)|E(H)|}{|V(H)|\rho(H^{(2)})}$ and $\beta(H)\geq\frac{\rho(H)}{\Delta(H^{(2)})}$. So parts (2) and (3) hold.
\end{proof}
We can derive the following result from Theorem \ref{thm3.2}.
\begin{corollary}
Let $H$ be an $(r+1)$-uniform hypergraph ($r\geq2$) such that $|e\cap f|\leq2$ for any two distinct edges $e,f\in E(H)$. If $\max_{u\in V(H)}|N(u)|$ is sufficiently large with respect to $r$ and $\beta(H)>r^{-2}\max_{u\in V(H)}|N(u)|$, then $K_1\ast K_{r+1}^+$ is a subhypergraph of $H$.
\end{corollary}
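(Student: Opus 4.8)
The plan is to obtain the conclusion as a direct application of part (1) of Theorem \ref{thm3.2}, taking $F=K_{r+1}^+$. First I would check that this is a legitimate choice of $F$: by construction $K_{r+1}^+$ is an $r$-uniform hypergraph, and since every original vertex of $K_{r+1}$ lies in some edge and every added vertex lies in exactly one enlarged edge, $K_{r+1}^+$ has no isolated vertices, as Theorem \ref{thm3.2} requires. Because $H$ is $(r+1)$-uniform, each local hypergraph $H(i)$ is $r$-uniform on the vertex set $N(i)$, so the uniformity matches. (Note also that the hypothesis $\beta(H)>r^{-2}\max_{u}|N(u)|>0$ forces $\rho(H)>0$, hence $H$ has at least one edge, so Theorem \ref{thm3.2} indeed applies.)

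The heart of the argument is choosing the class $\mathcal{H}$ so that $H(i)\in\mathcal{H}$ for every $i$ while keeping $spex_\mathcal{H}$ controllable. I would take $\mathcal{H}$ to be the class of all $r$-uniform linear hypergraphs, and the main step is to verify that the hypothesis $|e\cap f|\le2$ forces every $H(i)$ into this class. Two distinct edges of $H(i)$ are of the form $e\setminus\{i\}$ and $f\setminus\{i\}$ with $e,f\in E_i(H)$ distinct; since both contain $i$, their intersection in $H(i)$ is $(e\cap f)\setminus\{i\}$, which has size $|e\cap f|-1\le1$. Hence any two edges of $H(i)$ meet in at most one vertex, so $H(i)$ is linear and lies in $\mathcal{H}$. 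This verification is short, but it is the one place where the structural hypothesis on $H$ is actually used, so I regard it as the crux.

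With $\mathcal{H}$ fixed, Lemma \ref{lem2.4} applies directly: every $K_{r+1}^+$-free $r$-uniform linear hypergraph on $n$ vertices has spectral radius at most $n/r$ once $n$ is large, whence $spex_\mathcal{H}(n,K_{r+1}^+)\le n/r$ for all sufficiently large $n$. Setting $n=\max_{u\in V(H)}|N(u)|$, which is large by hypothesis, this gives
\begin{eqnarray*}
r^{-1}spex_\mathcal{H}\left(\max_{u\in V(H)}|N(u)|,K_{r+1}^+\right)\le r^{-2}\max_{u\in V(H)}|N(u)|<\beta(H).
\end{eqnarray*}
The resulting inequality $\beta(H)>r^{-1}spex_\mathcal{H}(\max_{u}|N(u)|,K_{r+1}^+)$ is exactly the premise of Theorem \ref{thm3.2}(1), so that theorem yields that $K_1\ast K_{r+1}^+$ is a subhypergraph of $H$, completing the proof. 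The only point needing care is that the ``sufficiently large'' threshold supplied by Lemma \ref{lem2.4} must be absorbed into the hypothesis that $\max_{u}|N(u)|$ is large with respect to $r$.
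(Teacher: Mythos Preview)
Your proposal is correct and follows exactly the same route as the paper: take $\mathcal{H}$ to be the class of $r$-uniform linear hypergraphs, use the intersection hypothesis to show each $H(i)$ is linear, invoke Lemma~\ref{lem2.4} to bound $spex_\mathcal{H}(\max_u|N(u)|,K_{r+1}^+)$ by $r^{-1}\max_u|N(u)|$, and then apply Theorem~\ref{thm3.2}(1). Your write-up is in fact more careful than the paper's, since you explicitly verify the linearity of $H(i)$ via $(e\cap f)\setminus\{i\}$, check that $K_{r+1}^+$ has no isolated vertices, and observe that $\beta(H)>0$ guarantees $E(H)\neq\emptyset$.
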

\begin{proof}
Let $\mathcal{H}$ be the set of $r$-uniform linear hypergraphs. Since $|e\cap f|\leq2$ for any two distinct edges $e,f\in E(H)$, we know that $H(i)\in\mathcal{H}$ for each vertex $i\in V(H)$. By Lemma \ref{lem2.4}, we have
\begin{eqnarray*}
spex_\mathcal{H}\left(\max_{i\in V(H)}|N(i)|,K_{r+1}^+\right)\leq r^{-1}\max_{u\in V(H)}|N(u)|
\end{eqnarray*}
when $\max_{u\in V(H)}|N(u)|$ is sufficiently large. If $\beta(H)>r^{-2}\max_{u\in V(H)}|N(u)|$, then
\begin{eqnarray*}
\beta(H)>r^{-1}spex_\mathcal{H}\left(\max_{i\in V(H)}|N(i)|,K_{r+1}^+\right).
\end{eqnarray*}
By Theorem \ref{thm3.2}, we know that $K_1\ast K_{r+1}^+$ is a subhypergraph of $H$.
\end{proof}
The \textit{Lagrangian} \cite{Hefetz} of an $r$-uniform hypergraph $H$ is defined as
\begin{eqnarray*}
\lambda(H)=\max\left\{\sum_{j_1\cdots j_r\in E(H)}x_{j_1}\cdots x_{j_r}:\sum_{i\in V(H)}x_i=1, x\in\mathbb{R}_{+}^{|V(H)|}\right\}.
\end{eqnarray*}
Given an $r$-uniform hypergraph $F$, the \textit{Lagrangian density} \cite{JiangPeng} $\pi_\lambda(F)$ of $F$ is
\begin{eqnarray*}
\pi_\lambda(F)=\sup\{r!\lambda(H): H~\textup{is $r$-uniform and \textit{F}-free}\}.
\end{eqnarray*}
The spectral radius $\rho(H)$ and Lagrangian of local hypergraph $H(u)$ have the following relationship.
\begin{theorem}\label{thm3.4}
Let $H$ be an $r$-uniform hypergraph ($r\geq3$) such that $|E(H)|\neq\emptyset$. Then there exists a vertex $u$ in $H$ such that
\begin{eqnarray*}
\rho(H)\leq\rho(H^{(2)})^{r-1}\lambda(H(u)).
\end{eqnarray*}
Moreover, for an $(r-1)$-uniform hypergraph $F$, $K_1\ast F$ is a subhypergraph of $H$ if one of the following holds:\\
(1) $(r-1)!\rho(H)>\rho(H^{(2)})^{r-1}\pi_\lambda(F)$.\\
(2) $r!|E(H)|>|V(H)|\rho(H^{(2)})^{r-1}\pi_\lambda(F)$.\\
(3) $(r-1)!\rho(H)>\Delta(H^{(2)})^{r-1}\pi_\lambda(F)$.
\end{theorem}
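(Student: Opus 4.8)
The plan is to mirror the structure of the proof of Theorem \ref{thm3.1}, but to replace the \emph{spectral-radius} normalization of the local hypergraph by its \emph{Lagrangian}. I would start with a nonnegative Perron eigenvector $x$ of $\mathcal{A}_H$, so that $\mathcal{A}_Hx^{r-1}=\rho(H)x^{[r-1]}$ (which is legitimate since $\mathcal{A}_H$ is nonnegative and $E(H)\neq\emptyset$ forces $\rho(H)>0$ by Lemma \ref{lem2.3}). Expanding the $i$-th coordinate of the eigenequation exactly as in Theorem \ref{thm3.1} gives
\[\rho(H)x_i^{r-1}=\sum_{j_1\cdots j_{r-1}i\in E(H)}x_{j_1}\cdots x_{j_{r-1}}=\sum_{\{j_1,\ldots,j_{r-1}\}\in E(H(i))}x_{j_1}\cdots x_{j_{r-1}}.\]
The first key step is to bound the right-hand side by $\lambda(H(i))$. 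Since $H(i)$ is $(r-1)$-uniform, the sum defining its Lagrangian is homogeneous of degree $r-1$; restricting $x$ to $N(i)$ and using this homogeneity yields $\sum_{\{j_1,\ldots,j_{r-1}\}\in E(H(i))}x_{j_1}\cdots x_{j_{r-1}}\leq\lambda(H(i))\big(\sum_{j\in N(i)}x_j\big)^{r-1}$.

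The second step, which I expect to be the crux, is to control $\sum_{j\in N(i)}x_j$ by $x_i$ at a well-chosen vertex. Here the difference from Theorem \ref{thm3.1} is essential: because the Lagrangian normalizes by the \emph{linear} sum $\sum_j x_j$ rather than by $\sum_j x_j^{r-1}$, I apply the min-max characterization of Lemma \ref{lem2.2} to the adjacency matrix $\mathcal{A}_{H^{(2)}}$ and to the eigenvector $x$ \emph{itself}, not to $x^{[r-1]}$ as was done in Theorem \ref{thm3.1}. Observing that $\sum_{j\in N(i)}x_j=(\mathcal{A}_{H^{(2)}}x)_i$, Lemma \ref{lem2.2} with $m=2$ produces a vertex $u$ with $x_u>0$ and $\sum_{j\in N(u)}x_j\leq\rho(H^{(2)})x_u$. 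Substituting into the bound above and cancelling $x_u^{r-1}>0$ gives the main inequality $\rho(H)\leq\rho(H^{(2)})^{r-1}\lambda(H(u))$.

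For the \textit{moreover} part I would argue by contraposition. If $K_1\ast F$ were not a subhypergraph of $H$, then no $H(i)$ could contain $F$, since a copy of $F$ inside $H(i)$ together with the apex $i$ is precisely a copy of $K_1\ast F$ in $H$; in particular the vertex $u$ from the first part has $H(u)$ being $(r-1)$-uniform and $F$-free. The definition of the Lagrangian density then gives $(r-1)!\,\lambda(H(u))\leq\pi_\lambda(F)$, and combining this with $\rho(H)\leq\rho(H^{(2)})^{r-1}\lambda(H(u))$ contradicts hypothesis (1). Finally, conditions (2) and (3) reduce to (1) through Lemma \ref{lem2.3}: the lower bound $\rho(H)\geq\frac{r|E(H)|}{|V(H)|}$ turns (2) into (1), while the graph bound $\rho(H^{(2)})\leq\Delta(H^{(2)})$ turns (3) into (1). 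The only genuine obstacle is the normalization bookkeeping in the second step; once the degree-$(r-1)$ homogeneity of the Lagrangian is recognized, the rest is routine.
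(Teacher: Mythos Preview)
Your proposal is correct and follows essentially the same approach as the paper: the paper also normalizes the restriction of $x$ to $N(i)$ by $\sum_{j\in N(i)}x_j$ to invoke $\lambda(H(i))$, then applies Lemma~\ref{lem2.2} to $\mathcal{A}_{H^{(2)}}$ with the vector $x$ itself (not $x^{[r-1]}$) to find $u$ with $x_u^{-1}\sum_{j\in N(u)}x_j\le\rho(H^{(2)})$, and handles parts (2)--(3) via Lemma~\ref{lem2.3} exactly as you describe. Your identification of the key difference from Theorem~\ref{thm3.1}---linear rather than $(r-1)$-power normalization in the min-max step---is precisely the point.
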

\begin{proof}
Since $|E(H)|\neq\emptyset$, we have $\rho(H)>0$. Let $x$ be a nonnegative eigenvector associated with $\rho(H)$, then $\mathcal{A}_Hx^{r-1}=\rho(H)x^{[r-1]}$. For any vertex $i$ satisfying $x_i>0$, we have
\begin{eqnarray*}
\rho(H)x_i^{r-1}=(\mathcal{A}_Hx^{r-1})_i=\sum_{j_1\cdots j_{r-1}i\in E(H)}x_{j_1}\cdots x_{j_{r-1}}
\end{eqnarray*}
and $\sum_{u\in N(i)}x_u>0$. Let $y$ be the $|N(i)|$-dimension nonnegative vector such that $y_j=x_j(\sum_{u\in N(i)}x_u)^{-1}$ for all $j\in N(i)$. Then $\sum_{j\in N(i)}y_j=1$ and
\begin{eqnarray*}
\rho(H)x_i^{r-1}=\left(\sum_{j\in N(i)}x_j\right)^{r-1}\sum_{j_1\cdots j_{r-1}i\in E(H)}y_{j_1}\cdots y_{j_{r-1}}\leq\left(\sum_{j\in N(i)}x_j\right)^{r-1}\lambda(H(i)).
\end{eqnarray*}
Notice that $\sum_{j\in N(i)}x_j=(\mathcal{A}_{H^{(2)}}x)_i$. By Lemma \ref{lem2.2}, we know that there exists a vertex $u$ such that
\begin{eqnarray*}
x_u^{-1}\sum_{j\in N(u)}x_j\leq\rho(H^{(2)}).
\end{eqnarray*}
Hence
\begin{eqnarray*}
\rho(H)\leq\rho(H^{(2)})^{r-1}\lambda(H(u)).
\end{eqnarray*}

Moreover, if $F$ is an $(r-1)$-uniform hypergraph and $(r-1)!\rho(H)>\rho(H^{(2)})^{r-1}\pi_\lambda(F)$, then
\begin{eqnarray*}
(r-1)!\lambda(H(u))\geq\frac{(r-1)!\rho(H)}{\rho(H^{(2)})^{r-1}}>\pi_\lambda(F).
\end{eqnarray*}
Since $\pi_\lambda(F)=\sup\{(r-1)!\lambda(G): G~\textup{is $(r-1)$-uniform and \textit{F}-free}\}$, we know that $F$ is a subhypergraph of $H(u)$. Hence $K_1\ast F$ is a subhypergraph of $H$. So part (1) holds.

By Lemma \ref{lem2.3}, we know that $\rho(H)\geq\frac{r|E(H)|}{|V(H)|}$ and $\rho(H^{(2)})\leq\Delta(H^{(2)})$. So parts (2) and (3) hold.
\end{proof}
Theorem \ref{thm3.4} implies that certain local subhypergraph will occur when $\frac{\rho(H)}{\rho(H^{(2)})^{r-1}}$ is large enough. The following is an example.
\begin{example}
The $r$-uniform $t$-matching, denoted by $M_t^r$, is the $r$-uniform hypergraph with $t$ pairwise disjoint edges. It is known that $\pi_\lambda(M_t^3)=3!{3t-1\choose3}(3t-1)^{-3}$ for $t\geq2$ (see \cite[Corollary 3.4]{JiangPeng}). For a $4$-uniform hypergraph $H$, if $\frac{\rho(H)}{\rho(H^{(2)})^3}>{3t-1\choose3}(3t-1)^{-3}$, then by Theorem \ref{thm3.4}, we know that $K_1\ast M_t^3$ is a subhypergraph of $H$.
\end{example}

\section{Local structure and $r$-clique spectral radius of graphs}

Recall that $\gamma_r(G)=\frac{\rho_r(G)}{\rho_2(H)}$, where $H$ is the subgraph of a graph $G$ obtained by deleting all edges that are not containing in an $r$-clique. Clearly, we have $\rho_r(G)=\rho(H_r)$ and $\gamma_r(G)=\beta(H_r)$, where $H_r$ is the $r$-uniform hypergraph with vertex set $V(H_r)=V(G)$ and edge set $E(H)=\{i_1\cdots i_r:\{i_1,\ldots,i_r\}\in C_r(G)\}$.

For a vertex $u$ of $G$, let $N_G(u)=\{v:vu\in E(G)\}$ denote the open neighborhood of $u$ in $G$, and let $G_u$ denote the subgraph induced by $N_G(u)$. The following result follows from Theorem \ref{thm3.1}.
\begin{theorem}\label{thm4.1}
Let $G$ be a graph such that $3\leq r\leq\omega(G)$ for an integer $r$. Then there exists a vertex $u$ in $G$ such that
\begin{eqnarray*}
\rho_{r-1}(G_u)\geq(r-1)\gamma_r(G)
\end{eqnarray*}
and $G$ has $p=\left\lceil(r-1)\beta_r(G)\right\rceil$ distinct $r$-cliques $C_1,\ldots,C_p$ such that $u\in C_1\cap\cdots\cap C_p$ and
\begin{eqnarray*}
|C_1\cap\cdots\cap C_p|\geq2.
\end{eqnarray*}
\end{theorem}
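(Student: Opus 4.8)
The plan is to deduce this statement directly from Theorem \ref{thm3.1} applied to the $r$-uniform hypergraph $H_r$. Recall from the discussion above that $\rho_r(G)=\rho(H_r)$ and $\gamma_r(G)=\beta(H_r)$, where $H_r$ has vertex set $V(G)$ and edge set consisting of the $r$-cliques of $G$. Since $r\leq\omega(G)$, the graph $G$ contains a clique of size at least $r$, hence at least one $r$-clique, so $E(H_r)\neq\emptyset$ and the hypotheses of Theorem \ref{thm3.1} are met (note $r\geq3$). Thus there is a vertex $u$ with $\rho(H_r(u))\geq(r-1)\beta(H_r)=(r-1)\gamma_r(G)$, and $H_r$ has $p=\lceil(r-1)\gamma_r(G)\rceil$ distinct edges $e_1,\ldots,e_p$ with $u\in e_1\cap\cdots\cap e_p$ and $|e_1\cap\cdots\cap e_p|\geq2$.

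The heart of the argument is to identify the local hypergraph $H_r(u)$ with the clique hypergraph $(G_u)_{r-1}$ so that $\rho(H_r(u))=\rho_{r-1}(G_u)$. By definition the edges of $H_r(u)$ are the sets $e\setminus\{u\}$ where $e$ is an $r$-clique of $G$ containing $u$. For such an $e$, the set $e\setminus\{u\}$ is an $(r-1)$-element subset of $N_G(u)$ that induces a clique in $G$, hence an $(r-1)$-clique of $G_u$; conversely every $(r-1)$-clique $S$ of $G_u$ yields the $r$-clique $S\cup\{u\}$ of $G$ containing $u$. So the edge set of $H_r(u)$ coincides exactly with $C_{r-1}(G_u)$, the edge set of the $(r-1)$-clique hypergraph $(G_u)_{r-1}$, and the two adjacency tensors assign the same nonzero weight $\frac{1}{(r-2)!}$ to each edge; they agree except possibly on the underlying vertex set.

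The one subtlety to address is that the vertex set $N(u)$ of $H_r(u)$, namely the neighborhood of $u$ in $H_r^{(2)}$, is precisely the set of vertices of $N_G(u)$ lying in some common $r$-clique with $u$, which may be a proper subset of $V(G_u)=N_G(u)$. The extra vertices $w\in N_G(u)\setminus N(u)$ lie in no $(r-1)$-clique of $G_u$, for otherwise $\{w\}$ together with that clique and $u$ would give a common $r$-clique; hence they are isolated in $(G_u)_{r-1}$. Such isolated vertices contribute nothing to the objective $\mathcal{A}x^{r-1}$ in Lemma \ref{lem Qi} while consuming budget in the constraint $\sum x_i^{r-1}=1$, so a nonnegative optimal vector can be taken to vanish on them and deleting them leaves the spectral radius unchanged. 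Therefore $\rho(H_r(u))=\rho((G_u)_{r-1})=\rho_{r-1}(G_u)$, which gives $\rho_{r-1}(G_u)\geq(r-1)\gamma_r(G)$.

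Finally, each edge $e_i$ of $H_r$ is by construction an $r$-clique $C_i$ of $G$, and distinct edges give distinct cliques, so the $p$ edges supplied by Theorem \ref{thm3.1} translate verbatim into $p=\lceil(r-1)\gamma_r(G)\rceil$ distinct $r$-cliques $C_1,\ldots,C_p$ of $G$ with $u\in C_1\cap\cdots\cap C_p$ and $|C_1\cap\cdots\cap C_p|\geq2$. The main work is thus concentrated in the structural identification of the second and third paragraphs; once $H_r(u)$ is recognized as $(G_u)_{r-1}$ up to isolated vertices, the remainder is a direct transcription of Theorem \ref{thm3.1} under the dictionary $\rho_r(G)=\rho(H_r)$, $\gamma_r(G)=\beta(H_r)$, and ``edge of $H_r$'' $=$ ``$r$-clique of $G$''.
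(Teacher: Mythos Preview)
Your proof is correct and follows exactly the route the paper takes: the paper simply states that Theorem~\ref{thm4.1} ``follows from Theorem~\ref{thm3.1}'' via the dictionary $\rho_r(G)=\rho(H_r)$, $\gamma_r(G)=\beta(H_r)$, and you have supplied precisely the details of that reduction, including the careful identification of $H_r(u)$ with $(G_u)_{r-1}$ up to isolated vertices.
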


The book graph $B_p$ is the graph that consists of $p$ triangles sharing a common edge. Take $r=3$ in Theorem \ref{thm4.1}, we obtain the following result, which shows that a graph $G$ has a large book subgraph when $\gamma_3(G)$ is large.
\begin{corollary}
Let $G$ a graph with at least one triangle. Then $B_p$ is a subgraph of $G$, where
\begin{eqnarray*}
p=\left\lceil2\gamma_3(G)\right\rceil\geq\left\lceil\frac{2\rho_3(G)}{\rho_2(G)}\right\rceil.
\end{eqnarray*}
\end{corollary}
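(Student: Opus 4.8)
The plan is to specialize Theorem \ref{thm4.1} to the case $r=3$ and then recognize the resulting family of $3$-cliques as a book. First I would observe that the hypothesis ``$G$ has at least one triangle'' is exactly $\omega(G)\geq 3$, i.e. the condition $3\leq r\leq\omega(G)$ with $r=3$, so Theorem \ref{thm4.1} applies. It furnishes a vertex $u$ together with $p=\lceil 2\gamma_3(G)\rceil$ pairwise distinct $3$-cliques $C_1,\ldots,C_p$ satisfying $u\in C_1\cap\cdots\cap C_p$ and $|C_1\cap\cdots\cap C_p|\geq 2$.

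Next I would extract the common edge. Since the intersection $C_1\cap\cdots\cap C_p$ contains $u$ and has cardinality at least $2$, there is a second vertex $v\neq u$ belonging to every $C_i$. As each $C_i$ is a $3$-clique containing both $u$ and $v$, it must have the form $C_i=\{u,v,w_i\}$; because the cliques are pairwise distinct and all share the pair $\{u,v\}$, the third vertices $w_1,\ldots,w_p$ are pairwise distinct. Thus the edge $\{u,v\}$ together with the $p$ triangles $\{u,v,w_i\}$ constitutes precisely the book graph $B_p$, whence $B_p$ is a subgraph of $G$.

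It remains to verify the displayed inequality. By definition $\gamma_3(G)=\rho_3(G)/\rho_2(H)$, where $H$ is the subgraph of $G$ obtained by deleting every edge that is not contained in a triangle. Since $H$ is a subgraph of $G$ and $\rho_2(\cdot)$ is the ordinary adjacency spectral radius, monotonicity of the spectral radius under edge addition gives $\rho_2(H)\leq\rho_2(G)$, so $\gamma_3(G)\geq\rho_3(G)/\rho_2(G)$. Applying the (monotone) ceiling function then yields $p=\lceil 2\gamma_3(G)\rceil\geq\lceil 2\rho_3(G)/\rho_2(G)\rceil$, as claimed.

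This argument is essentially a direct reading of Theorem \ref{thm4.1}; the only genuine content is the combinatorial remark that $p$ distinct $3$-cliques meeting in a common pair of vertices form a book, which is immediate once the shared vertex $v$ is produced. I expect no real obstacle, provided one is careful that the instance $r=3$ produces triangles (rather than larger cliques), so that each $C_i$ contributes a well-defined distinct third vertex $w_i$.
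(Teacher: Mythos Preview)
Your proposal is correct and follows exactly the route the paper indicates: the paper simply says ``Take $r=3$ in Theorem~\ref{thm4.1}'' and leaves all the details implicit, and you have spelled them out, including the observation that $p$ distinct $3$-cliques sharing a common pair form $B_p$ and the inequality via $\rho_2(H)\leq\rho_2(G)$ for the triangle-edge subgraph $H\subseteq G$. Nothing is missing.
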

For a graph $F$, let $K_1\vee F$ denote the graph with vertex set $V(F)\cup\{x\}$ and edge set $E(F)\cup\{xu:u\in V(F)\}$, and let $spex(n,F)$ denote the maximum spectral radius of an $n$-vertex $F$-free graph. Let $\Delta(H)$ denote the maximum degree of a graph $H$.
\begin{theorem}
Let $G$ a graph with at least one triangle, and let $H$ be the subgraph of $G$ obtained by deleting all edges that are not containing in a triangle. Suppose that $F$ is a graph without isolated vertices. Then $K_1\vee F$ is a subgraph of $G$ if one of the following holds:\\
(1) $\gamma_3(G)>\frac{1}{2}spex\left(\Delta(H),F\right)$.\\
(2) $6|C_3(G)|>|V(G)|\rho_2(H)spex\left(\Delta(H),F\right)$.\\
(3) $\rho_3(G)>\frac{1}{2}\Delta(H)spex\left(\Delta(H),F\right)$.
\end{theorem}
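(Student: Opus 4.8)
The plan is to recognize this statement as the graph specialization of Theorem \ref{thm3.2}, obtained by applying that theorem to the $3$-uniform ``triangle hypergraph'' $H_3$ associated with $G$, taken with $r=2$. Recall from the start of Section 4 that $\rho_3(G)=\rho(H_3)$ and $\gamma_3(G)=\beta(H_3)$, where $H_3$ has vertex set $V(G)$ and one edge $\{i_1,i_2,i_3\}$ for each triangle of $G$; moreover $|E(H_3)|=|C_3(G)|$ and $|V(H_3)|=|V(G)|$. Since $G$ has at least one triangle, $H_3$ has at least one edge, so the hypotheses of Theorem \ref{thm3.2} hold for the $(r+1)$-uniform hypergraph $H_3$ with $r=2$, taking $\mathcal{H}$ to be the class of all graphs ($2$-uniform hypergraphs), so that $spex_\mathcal{H}=spex$ and the requirement $H_3(i)\in\mathcal{H}$ is automatic.

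The first thing I would verify is the identity $H_3^{(2)}=H$. Indeed, $\{u,v\}$ is an edge of the $2$-section $H_3^{(2)}$ exactly when some triangle of $G$ contains both $u$ and $v$, i.e. exactly when $uv$ is an edge of $G$ lying in a triangle; this is precisely the edge set of $H$. Consequently $\rho(H_3^{(2)})=\rho_2(H)$, $\Delta(H_3^{(2)})=\Delta(H)$, and the quantity $\max_i|N(i)|$ appearing in Theorem \ref{thm3.2}, where $N(i)$ is the neighborhood in $H_3^{(2)}$, equals $\Delta(H)$. The second identification is between the local hypergraph $H_3(u)$ and the induced neighborhood graph $G_u$: an edge $e\setminus\{u\}=\{a,b\}$ of $H_3(u)$ arises from a triangle $\{u,a,b\}$ of $G$, which is exactly a pair of adjacent vertices $a,b\in N_G(u)$, i.e. an edge of $G_u$. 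The two graphs can differ only by isolated vertices (neighbors of $u$ in $G$ whose incident edge to $u$ lies in no triangle), but since $F$ has no isolated vertices, $F$ embeds in $H_3(u)$ if and only if it embeds in $G_u$, and in any case $\rho(H_3(u))=\rho(G_u)$.

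With these dictionaries in place, the conclusion of Theorem \ref{thm3.2}, that $K_1\ast F$ is a subhypergraph of $H_3$, translates into the desired statement. A copy of $K_1\ast F$ in $H_3$ consists of an apex vertex $x$ together with an embedded copy of $F$ such that $\{x,u_1,u_2\}$ is a triangle of $G$ for every edge $u_1u_2$ of $F$; since $F$ has no isolated vertices, $x$ is adjacent in $G$ to every vertex in the image of $F$, and all edges of $F$ are edges of $G$, so $x$ and the image of $F$ span a copy of $K_1\vee F$ in $G$. Finally, substituting $r=2$ together with the identifications $\beta(H_3)=\gamma_3(G)$, $\rho(H_3)=\rho_3(G)$, $\rho(H_3^{(2)})=\rho_2(H)$, $|E(H_3)|=|C_3(G)|$ and $\max_i|N(i)|=\Delta(H)$ carries conditions (1), (2), (3) of Theorem \ref{thm3.2} exactly onto conditions (1), (2), (3) of the present statement; for (2) one only notes that $3|C_3(G)|>\frac{1}{2}|V(G)|\rho_2(H)spex(\Delta(H),F)$ is the same inequality as $6|C_3(G)|>|V(G)|\rho_2(H)spex(\Delta(H),F)$.

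I would expect the only real care to lie not in any inequality but in the bookkeeping of the second paragraph: confirming $H_3^{(2)}=H$ and that the local hypergraph $H_3(u)$ is the induced neighborhood graph $G_u$ up to isolated vertices, so that the no-isolated-vertex hypothesis on $F$ is precisely what makes the reduction to Theorem \ref{thm3.2} lossless. Everything else is a direct substitution of the parameter $r=2$ into an already-established theorem.
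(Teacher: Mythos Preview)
Your proposal is correct and is essentially the paper's own argument, just routed through Theorem~\ref{thm3.2} instead of Theorem~\ref{thm4.1}; since both of those theorems are immediate consequences of Theorem~\ref{thm3.1}, the difference is purely cosmetic. The bookkeeping you flag (that $H_3^{(2)}=H$ and that $H_3(u)$ agrees with $G_u$ up to isolated vertices) is exactly what the paper uses implicitly when it writes $\rho_2(H_u)=\rho_2(G_u)$, and your reduction of parts (2) and (3) via Theorem~\ref{thm3.2} matches the paper's use of Lemma~\ref{lem2.5} and the bound $\rho_2(H)\leq\Delta(H)$.
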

\begin{proof}
If $\gamma_3(G)>\frac{1}{2}spex\left(\Delta(H),F\right)$, then by Theorem \ref{thm4.1}, there exists a vertex $u$ such that
\begin{eqnarray*}
\rho_2(H_u)=\rho_2(G_u)\geq2\gamma_3(G)>spex\left(\Delta(H),F\right)\geq spex_\mathcal{H}(|V(H_u)|,F).
\end{eqnarray*}
Hence $F$ is a subgraph of $H_u$, that is, $K_1\vee F$ is a subgraph of $G$. So part (1) holds.

By Lemma \ref{lem2.5}, we have $\gamma_3(G)=\frac{\rho_3(G)}{\rho_2(H)}\geq\frac{3c_3(G)}{|V(G)|\rho_2(H)}$, so part (2) holds. Since $\gamma_3(G)\geq\frac{\rho_3(G)}{\Delta(H)}$, part (3) holds.
\end{proof}
From Theorem \ref{thm3.4}, we can derive the following relation between the spectral radius and clique number of a graph.
\begin{theorem}
For any graph $G$ and any integer $r$ satisfying $3\leq r\leq\omega(G)$, we have
\begin{eqnarray*}
\rho_r(G)\leq\rho(\widetilde{G})^{r-1}{\omega(G)-1\choose r-1}(\omega(G)-1)^{-(r-1)}.
\end{eqnarray*}
where $\widetilde{G}$ is the subgraph of $G$ obtained by deleting all edges that are not containing in an $r$-clique.
\end{theorem}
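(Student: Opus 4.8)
The plan is to recognize $\rho_r(G)$ as the spectral radius of the hypergraph $H_r$ and feed $H_r$ into Theorem \ref{thm3.4}, so the whole statement becomes a translation problem plus one application of Lemma \ref{lem2.6}. First I would record the two structural identifications that make the translation work. Since every edge of $H_r$ is an $r$-clique of $G$, two vertices lie in a common edge of $H_r$ exactly when they lie in a common $r$-clique, i.e.\ exactly when the edge joining them survives in $\widetilde{G}$; hence $H_r^{(2)}=\widetilde{G}$ and $\rho(H_r^{(2)})=\rho(\widetilde{G})$. Second, for a vertex $u$ the local hypergraph $H_r(u)$ has vertex set $N(u)$ (the neighbourhood in $H_r^{(2)}=\widetilde{G}$, a subset of $N_G(u)$), and its edges are the $(r-1)$-sets $S\subseteq N(u)$ with $S\cup\{u\}$ an $r$-clique; as every vertex of $S$ is already adjacent to $u$, this says precisely that $S$ is an $(r-1)$-clique of the induced subgraph $G[N(u)]$. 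Thus $H_r(u)$ is the $(r-1)$-clique hypergraph of $G[N(u)]$, and $\lambda(H_r(u))$ is exactly the maximum appearing in Lemma \ref{lem2.6} with $t=r-1$ applied to $G[N(u)]$.

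With these identifications in place, applying Theorem \ref{thm3.4} to $H=H_r$ produces a vertex $u$ with
\[
\rho_r(G)=\rho(H_r)\le\rho(\widetilde{G})^{\,r-1}\,\lambda(H_r(u)).
\]
Next I would evaluate $\lambda(H_r(u))$ by Lemma \ref{lem2.6} applied to $G[N(u)]$ with $t=r-1$, obtaining
\[
\lambda(H_r(u))=\binom{\omega(G[N(u)])}{r-1}\,\omega(G[N(u)])^{-(r-1)},
\]
valid once $r-1\le\omega(G[N(u)])$. To turn this into the stated bound I would use that $N(u)$ consists of common neighbours of $u$, so any clique of $G[N(u)]$ together with $u$ is a clique of $G$; hence $\omega(G[N(u)])\le\omega(G)-1$. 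Finally, writing $g(k)=\binom{k}{r-1}k^{-(r-1)}=\frac{1}{(r-1)!}\prod_{j=1}^{r-2}\bigl(1-\tfrac{j}{k}\bigr)$ exhibits $g$ as nondecreasing in $k$, since each factor $1-j/k$ increases with $k$; therefore $\lambda(H_r(u))=g(\omega(G[N(u)]))\le g(\omega(G)-1)$, which combines with the displayed inequality to give the claim.

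Two points need care. First, Lemma \ref{lem2.6} requires $t=r-1\le\omega(G[N(u)])$, and I would secure this from positivity: because $r\le\omega(G)$, the graph $G$ contains an $r$-clique, so $\rho_r(G)>0$, and then the displayed inequality forces $\lambda(H_r(u))>0$, meaning $H_r(u)$ has an edge, i.e.\ $u$ lies in an $r$-clique, so $\omega(G[N(u)])\ge r-1$. Second, the monotonicity of $g$ is the only genuinely analytic step; I expect it to be the main (if minor) obstacle, as everything else is bookkeeping about which $2$-section graph and which local hypergraph arise. The product representation of $g$ settles it cleanly, each factor being increasing in $k$.
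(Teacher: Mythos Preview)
Your proposal is correct and follows essentially the same route as the paper: identify $\rho_r(G)=\rho(H_r)$ and $H_r^{(2)}=\widetilde{G}$, apply Theorem~\ref{thm3.4} to obtain $\rho_r(G)\le\rho(\widetilde{G})^{r-1}\lambda(H_r(u))$, and then evaluate $\lambda(H_r(u))$ via Lemma~\ref{lem2.6} using $\omega(G[N(u)])\le\omega(G)-1$. Your treatment is in fact more careful than the paper's, which silently assumes both the applicability condition $r-1\le\omega(G_u)$ and the monotonicity of $k\mapsto\binom{k}{r-1}k^{-(r-1)}$; your justification of the latter via the product form $\frac{1}{(r-1)!}\prod_{j=1}^{r-2}(1-j/k)$ is exactly what is needed to close that step.
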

\begin{proof}
Let $H$ be the $r$-uniform hypergraph with vertex set $V(H)=V(G)$ and edge set $E(H)=\{i_1\cdots i_r:\{i_1,\ldots,i_r\}\in C_r(G)\}$. Then $\rho_r(G)=\rho(H)$ and $\widetilde{G}=H^{(2)}$ is the $2$-section graph of $H$. By Theorem \ref{thm3.4}, there exists a vertex $u$ in $H$ such that
\begin{eqnarray*}
\rho_r(G)\leq\rho(\widetilde{G})^{r-1}\lambda(H(u)).
\end{eqnarray*}
Note that $\lambda(H(u))=\max\left\{\sum_{\{j_1,\ldots,j_{r-1}\}\in C_{r-1}(G_u)}x_{j_1}\cdots x_{j_{r-1}}:\sum_{i\in N(u)}x_i=1, x\in\mathbb{R}_{+}^{|N(u)|}\right\}$. Since $\omega(G_u)\leq\omega(G)-1$, by Lemma \ref{lem2.6}, we have
\begin{eqnarray*}
\lambda(H(u))={\omega(G_u)\choose r-1}\omega(G_u)^{-(r-1)}\leq{\omega(G)-1\choose r-1}(\omega(G)-1)^{-(r-1)}.
\end{eqnarray*}
\end{proof}

\section{Local chromatic number and graph eigenvalues}

In \cite{ErdosFuredi}, Erd\H{o}s et al. defined the \textit{local chromatic number} $\psi(G)$ of a graph $G$ as
\begin{eqnarray*}
\psi(G)=1+\min_c\max_{v\in V(G)}|\{c(u):u\in N_G(v)\}|,
\end{eqnarray*}
where the minimum is taken over all proper colorings $c$ of $G$. Then $\psi(G)$ is a local graph parameter based on graph colorings, and $\psi(G)$ is bounded from below by the fractional chromatic number of $G$ (see \cite{Korner}).

Considering the localized problem of vector chromatic number, it is natural to define the \textit{local vector chromatic number} of $G$ as
\begin{eqnarray*}
\phi(G)=1+\max_{u\in V(G)}\chi_v(G_u).
\end{eqnarray*}
Then it is easy to obtain the following relations
\begin{eqnarray*}
\omega(G)\leq\phi(G)\leq1+\max_{u\in V(G)}\chi(G_u)\leq\psi(G)\leq\chi(G).
\end{eqnarray*}

For a graph $G$ with vertex set $\{1,\ldots,n\}$, we say that a set $f=\{u_1,\ldots,u_n\}$ of unit real vectors is a \textit{$2$-distance representation} of $G$ if the following hold:\\
(1) $u_i^\top u_j=0$ whenever the distance between vertices $i$ and $j$ is $2$.\\
(2) $u_i^\top u_j\geq0$ for any $1\leq i\leq j\leq n$.

For a $2$-distance representation $f=\{u_1,\ldots,u_n\}$ of $G$, let $\mathcal{A}_f(G)=(a_{ijk})$ denote the $3$-order $n$-dimension \textit{weighted $3$-clique tensor} of $G$ with entries
\begin{align*}
a_{ijk}=
\begin{cases}
   u_j^\top u_k, &\{i,j,k\}\in C_3(G).\\
   0, &\textup{otherwise}.
\end{cases}
\end{align*}
Let $\mu_f(G)$ denote the spectral radius of $\mathcal{A}_f(G)$.
\begin{theorem}
Let $G$ be a graph with at least one triangle, and let $H$ be the subgraph of $G$ obtained by deleting all edges that are not containing in a triangle. For any $2$-distance representation $f$ of $G$, we have
\begin{eqnarray*}
\phi(G)\geq2+\left\lceil\frac{\mu_f(G)}{\rho_2(H)}\right\rceil.
\end{eqnarray*}
\end{theorem}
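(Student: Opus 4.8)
The plan is to run an eigenvector localization argument in the spirit of Theorems~\ref{thm3.1} and~\ref{thm3.4}, but with the weighted $3$-clique tensor $\mathcal{A}_f(G)$ in place of an adjacency tensor, and to feed the local quantity into the spectral description of the Schrijver theta function from Lemma~\ref{lem2.7}. The conceptual heart is the remark that a $2$-distance representation restricts, on each neighborhood, to a \emph{nonnegative orthonormal representation}. Fix a vertex $u$: if $a,b\in N_G(u)$ are nonadjacent in $G_u$, then, both being neighbors of $u$ and not of each other, they are at distance exactly $2$, so $u_a^\top u_b=0$ by property (1); with property (2) and the unit-norm condition this makes $\{u_a:a\in N_G(u)\}$ a nonnegative orthonormal representation of $G_u$. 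Writing $M^{(u)}$ for the Gram matrix of the subfamily $\{u_a:a\in N_H(u)\}$, this $M^{(u)}$ is a principal submatrix of the Gram matrix of the full representation, whose largest eigenvalue is at most $\vartheta'(\overline{G_u})=\chi_v(G_u)$ by Lemmas~\ref{lem2.7} and~\ref{lem2.8}; by eigenvalue interlacing, $\lambda_{\max}(M^{(u)})\le\chi_v(G_u)$.

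Next I would localize the tensor eigenvalue. Since the nonzero entries $u_j^\top u_k$ of $\mathcal{A}_f(G)$ are nonnegative, there is a nonnegative eigenvector $x$ with $\mathcal{A}_f(G)x^2=\mu_f(G)x^{[2]}$. The pairs $\{a,b\}$ with $\{i,a,b\}\in C_3(G)$ are exactly the edges of $G_i$, and each such edge has both endpoints in $N_H(i)$; expanding the $i$-th coordinate of the eigenvalue equation and comparing with the quadratic form of $M^{(i)}$ gives
\begin{eqnarray*}
\mu_f(G)x_i^2=2\sum_{\{a,b\}\in E(G_i)}(u_a^\top u_b)x_ax_b=y^\top M^{(i)}y-\sum_{a\in N_H(i)}x_a^2,
\end{eqnarray*}
where $y$ is the restriction of $x$ to $N_H(i)$ and the subtracted term is precisely the unit-norm diagonal of $M^{(i)}$. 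Bounding $y^\top M^{(i)}y\le\lambda_{\max}(M^{(i)})\sum_{a\in N_H(i)}x_a^2$ and invoking the previous paragraph yields $\mu_f(G)x_i^2\le(\chi_v(G_i)-1)\sum_{a\in N_H(i)}x_a^2$.

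Finally I would pass to the matrix $\mathcal{A}_H$. Setting $z=x^{[2]}$, one has $\sum_{a\in N_H(i)}x_a^2=(\mathcal{A}_Hz)_i$, so the last inequality reads $\mu_f(G)z_i\le(\chi_v(G_i)-1)(\mathcal{A}_Hz)_i$. Applying Lemma~\ref{lem2.2} to the nonnegative matrix $\mathcal{A}_H$ supplies a vertex $u$ with $z_u>0$ and $(\mathcal{A}_Hz)_u\le\rho_2(H)z_u$; dividing by $z_u$ gives $\mu_f(G)\le(\chi_v(G_u)-1)\rho_2(H)$, i.e. $\chi_v(G_u)\ge1+\mu_f(G)/\rho_2(H)$, and hence $\phi(G)=1+\max_v\chi_v(G_v)\ge2+\mu_f(G)/\rho_2(H)$. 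The stated ceiling is then recovered by integrality: since $\chi(G_u)\ge\chi_v(G_u)\ge1+\mu_f(G)/\rho_2(H)$ with $\chi(G_u)\in\mathbb{Z}$, we get $\chi(G_u)\ge1+\lceil\mu_f(G)/\rho_2(H)\rceil$, which upgrades the lower bound to the required form.

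I expect the main obstacle to be the second step, namely establishing the displayed identity with the exact $-1$ shift. This rests on two checks that must be made carefully: that every edge of $G_i$ has both endpoints in $N_H(i)$ (so the off-diagonal part of $y^\top M^{(i)}y$ reproduces the triangle sum with the correct factor $2$), and that the distance-$2$ orthogonality of property (1) annihilates all cross terms between nonadjacent neighbors, which is exactly what certifies $M^{(i)}$ as a genuine orthonormal-representation Gram matrix and lets Lemma~\ref{lem2.7} apply. The integrality step producing $\lceil\cdot\rceil$ is the other delicate point, since $\chi_v$ is real-valued and the rounding must be routed through the chromatic number; the remaining localization via Lemma~\ref{lem2.2} is then routine, paralleling Theorems~\ref{thm3.1} and~\ref{thm3.4}.
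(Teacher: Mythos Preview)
Your argument is essentially the paper's: take a nonnegative eigenvector of $\mathcal{A}_f(G)$, rewrite the $i$-th eigenvalue equation as $y^\top(M-I)y$ for the Gram matrix $M$ of $\{u_a:a\in N_H(i)\}$, bound $\lambda_{\max}(M)$ by $\chi_v(G_i)$ via Lemmas~\ref{lem2.7}--\ref{lem2.8}, and then localize with Lemma~\ref{lem2.2} applied to $z=x^{[2]}$. The only cosmetic difference is that you reach $\lambda_{\max}(M)\le\chi_v(G_i)$ by interlacing from the larger Gram matrix on $N_G(i)$, whereas the paper observes directly that $M$ is already the Gram matrix of a nonnegative orthonormal representation of $H_i$ (if $j,k\in N_H(i)$ are nonadjacent in $H$ they are nonadjacent in $G$, since an edge $jk\in E(G)$ would make $\{i,j,k\}$ a triangle and force $jk\in E(H)$), giving $\lambda_{\max}(M)\le\chi_v(H_i)\le\chi_v(G_i)$. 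Either route yields $\phi(G)\ge 2+\mu_f(G)/\rho_2(H)$.

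Your attempt to recover the ceiling, however, does not work: from $\chi(G_u)\ge\chi_v(G_u)\ge 1+\mu_f(G)/\rho_2(H)$ and $\chi(G_u)\in\mathbb{Z}$ you only obtain $1+\max_v\chi(G_v)\ge 2+\lceil\mu_f(G)/\rho_2(H)\rceil$, which is a lower bound on a quantity \emph{above} $\phi(G)=1+\max_v\chi_v(G_v)$, not on $\phi(G)$ itself. To be fair, the paper's own proof offers no justification for the ceiling either; it simply writes $\phi(G)\ge 2+\lceil\mu_f(G)/\rho_2(H)\rceil$ immediately after deriving the unrounded inequality. So up to this unresolved rounding step your proof matches the paper's.
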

\begin{proof}
Suppose that $f=\{u_1,\ldots,u_n\}$ is a $2$-distance representation of $G$. Let $x$ be a nonnegative eigenvector associated with $\mu_f(G)$, then $\mathcal{A}_f(G)x^2=\mu_f(G)x^{[2]}$. For any vertex $i$ of $G$, we have
\begin{eqnarray*}
\mu_f(G)x_i^2=(\mathcal{A}_f(G)x^2)_i=2\sum_{jk\in E(H_i)}u_j^\top u_kx_jx_k.
\end{eqnarray*}
Note that $u_j^\top u_k=0$ whenever $i,k$ are two distinct non-adjacent vertices in $H_i$. So $\{u_j:j\in N_H(i)\}$ is a nonnegative orthonormal representation of $H_i$. Let $y$ be the $|N_H(i)|$-dimension vector such that $y_j=x_j$ for all $j\in N_H(i)$. Then
\begin{eqnarray*}
\mu_f(G)x_i^2&=&2\sum_{jk\in E(H_i)}u_j^\top u_kx_jx_k=y^\top(M-I)y\\
&\leq&(\lambda_1(M)-1)\sum_{j\in N_H(i)}x_j^2,
\end{eqnarray*}
where $M$ is the Gram matrix of $\{u_j:j\in N_H(i)\}$, $\lambda_1(M)$ is the maximum eigenvalue of $M$. By Lemmas \ref{lem2.7} and \ref{lem2.8}, we have
\begin{eqnarray*}
\mu_f(G)x_i^2&\leq&(\lambda_1(M)-1)\sum_{j\in N_H(i)}x_j^2\leq(\chi_v(H_i)-1)\sum_{j\in N_H(i)}x_j^2,\\
&\leq&(\chi_v(G_i)-1)\sum_{j\in N_H(i)}x_j^2.
\end{eqnarray*}
Let $z$ be the $|V(G)|$-dimension vector such that $z_j=x_j^2$ for all $j\in V(G)$. Then $\sum_{j\in N_H(i)}x_j^2=(A_Hz)_i$. By Lemma \ref{lem2.2}, we know that there exists a vertex $u$ such that
\begin{eqnarray*}
x_u^{-2}\sum_{j\in N_H(u)}x_j^2=z_u^{-1}\sum_{j\in N_H(u)}z_j\leq\rho_2(H).
\end{eqnarray*}
Since $\phi(G)=1+\max_{i\in V(G)}\chi_v(G_i)$, we obtain
\begin{eqnarray*}
\mu_f(G)&\leq&(\chi_v(G_u)-1)\rho_2(H)\leq(\phi(G)-2)\rho_2(H),\\
\phi(G)&\geq&2+\left\lceil\frac{\mu_f(G)}{\rho_2(H)}\right\rceil.
\end{eqnarray*}
\end{proof}





\vspace{3mm}
\noindent

\end{CJK*}
\end{spacing}
\end{document}